\theoremstyle{plain}
\newtheorem{theorem}{Theorem}[section]
\newtheorem{definition}[theorem]{Definition}
\newtheorem{lemma}[theorem]{Lemma}
\newtheorem{corollary}[theorem]{Corollary}
\newtheorem{proposition}[theorem]{Proposition}
\newtheorem{remark}[theorem]{Remark}
\newtheorem*{thma}{Theorem A}
\newtheorem*{thmb}{Theorem B}
\theoremstyle{definition}
\newcommand{\C}{\mathbb{C}}
\newcommand{\R}{\mathbb{R}}
\newcommand{\Z}{\mathbb{Z}}
\newcommand{\Sp}{\mathbb{S}}
\newcommand{\Hy}{\mathbb{H}}
\newcommand{\Ker}{\mathit{Ker}}
\newcommand{\colim}{\mathit{colim}}
\newcommand{\PP}{\mathcal{P}}
\newcommand{\CC}{\mathcal{C}}
\numberwithin{equation}{section}
\begin{document}
	\title{ Inertia Groups and Smooth Structures on Quaternionic Projective Spaces}
	\vspace{2cm}
	
	 \author{Samik Basu}
\address{Stat-Math Unit\\ Indian Statistical Institute \\ Kolkata -- 700108 \\ India.}
\email{samik.basu2@gmail.com; samikbasu@isical.ac.in}
	
\author{Ramesh Kasilingam}
	
	\address{Department of Mathematics,
		Indian Institute Of Technology, Chennai-600036, India}
\email{rameshkasilingam.iitb@gmail.com  ; rameshk@iitm.ac.in}
	
	\date{}
	\subjclass [2010] {Primary : {57R60, 57R55; Secondary : 55P42, 55P25}}
	\keywords{Quaternionic projective spaces, smooth structures, concordance.}
	%\paragraph{Classification.}
	%57R55; 57R50.
	
	\maketitle

	\begin{abstract}
This paper deals with certain results on the number of smooth structures on quaternionic projective spaces, obtained through the computation of inertia group and its analogues, which in turn are computed using techniques from stable homotopy theory. We show that the concordance inertia group is trivial in dimension 20, but there are many examples in high dimensions where the concordance inertia group is non-trivial. We extend these to computations of concordance classes of smooth structures. These have applications to $3$-sphere actions on homotopy spheres and tangential homotopy structures. 
	\end{abstract}

	\section{Introduction}

The study of exotic structures on manifolds\footnote{In this paper all manifolds will be closed smooth, oriented and connected, and all homeomorphisms and diffeomorphisms are assumed to preserve orientation, unless otherwise stated.} is a topic of fundamental interest in differential topology. The first such example comes up in the celebrated paper of Milnor \cite{Mil56}, in examples of manifolds that are homeomorphic to $S^7$ but not diffeomorphic. This inspires the definition of the set $\Theta_n$ of differentiable manifolds homeomorphic to $S^n$. These form a group under connected sum and are related to stable homotopy groups in \cite{KM63}. 

A possible way to change the smooth structure on an oriented smooth manifold $M^n$ without changing the homeomorphism type is by taking connected sum with an exotic sphere.  This induces an action of $\Theta_n$ on the set of smooth structures on $M$. The stabilizer of $M$ under this action is the inertia group $I(M)$ of $M$. 

This paper deals with computations in the inertia groups of quaternionic projective spaces, and associated computations for smooth structures. Computations in the inertia group are known for certain products of spheres \cite{Sch71}, $3$-sphere bundles over $S^4$ \cite{Tam62}, low dimensional complex projective spaces \cite{Kaw68, BK}. However, there is no systematic approach for computing inertia groups in general, and many problems are still open. 

One also makes certain analogous definitions for an oriented smooth manifold $M$, such as the homotopy inertia group $I_h(M)$ and the concordance inertia group $I_c(M)$. $I_h(M)$ (respectively $I_c(M)$) consists of those $\Sigma \in I(M)$  for which the diffeomorphism $M\# \Sigma\cong M$ is homotopic (respectively concordant) to the canonical homeomorphism $h_{can}:M\#\Sigma\to M$. These groups are the same for spheres and complex projective spaces, but for $\Hy P^n$, the homotopy inertia group and the concordance inertia group are the same, while the inertia group may be different from these. It follows from \cite{AF04} that the concordance inertia group is trivial for $n\leq 4$, while \cite{KS07} demonstrates that the inertia group of $\Hy P^2$ is non-trivial. In this paper we prove 
\begin{thma}
\begin{itemize}
\item[(a)] $I_c(\Hy P^5)=0$.
\item[(b)] There are infinitely many values of $n$ for which there exist non-trivial elements in the inertia group $I(\Hy P^n)$. 
\end{itemize}
\end{thma}
(cf. Theorem \ref{exotic} and Theorem \ref{inerhigh} ).

We follow up computations in the inertia group by computing the group $\mathcal{C}(\Hy P^n)$ of concordance classes of smooth structures on $\Hy P^n$ for $n\leq 5$. For $n=2$, this was computed in \cite{KS07}, where it was shown that $\mathcal{C}(\Hy P^2)=\{[(\Hy P^2,Id)], [(\Hy P^2\#\Sigma^8,h_{can})]\}$, where $\Sigma^8$ is the exotic $8$-sphere. For $3\leq n\leq 5$, we prove

\begin{thmb}
	\begin{itemize}
		\item[(a)] There is an isomorphism $\mathcal{C}(\Hy P^3)\to \mathcal{C}(\Hy P^2)$.
		\item[(b)] There is an isomorphism $\Theta_{16}\cong \mathbb{Z}_2\to \CC(\Hy P^4)$.
	   \item[(c)]  There is a split short exact sequence $$0 \to \Theta_{20} \to \mathcal{C}(\mathbb{H}P^5)\to \mathcal{C}(\mathbb{H}P^4)\to 0,$$ where $\Theta_{20}\cong \mathbb{Z}_{24}$.
	\end{itemize}
\end{thmb}
(cf. Theorem \ref{main}).

We also relate the group $\mathcal{C}(\Hy P^n)$ to the tangential smooth structures set of $\Hy P^n$, so that these computations also imply results for tangential smooth structures on $\Hy P^n$ for $n\leq 5$. As a consequence of this, we show that if $M$ is a smooth manifold tangential homotopy equivalent to $\mathbb{H} P^4$, then there is a homotopy $16$-sphere $\Sigma$ such that $M$ is diffeomorphic to $\mathbb{H} P^4\#\Sigma$.

Computations in the concordance group allows us to discuss free smooth actions of the unit quaternionic sphere on homotopy spheres. The study of such actions has been of considerable interest \cite{Hsi66, Ka12, Ku69, KK70, KK71}. The orbit spaces of such actions are always homotopy equivalent to $\Hy P^n$. We explore such actions such that the orbit spaces are homeomorphic to $\Hy P^n$, and prove that for $n=2,4$, there are exactly two non-equivalent actions on the standard sphere , while for $n=3$, there is an exotic $15$-sphere which supports such an action. 

\subsection{Organisation} In section \ref{iner-quart}, we introduce some preliminaries on the inertia group and make computations for $\Hy P^n$. In section \ref{smstr-quart}, we compute the set of concordance classes of $\Hy P^n$. Section \ref{s3act} deals with applications to $3$-sphere actions on homotopy spheres and Section \ref{sec5} deals with applications to tangential homotopy structures. 

\subsection{Notation} Denote by $O=\underset{n\to \infty}{\colim}~O_n$, $Top= \underset{n\to \infty}{\colim}~ Top_n$, $F=\underset{n\to \infty}{\colim}~ F_n$ and 
$SF=\underset{n\to \infty}{\colim}~ SF_n$ (\cite{KL66, LR65}), the direct limit of the groups of orthogonal transformations, self-homeomorphisms of $\mathbb{R}^n$ preserving the origin, base point preserving self-homotopy equivalences of $\mathbb{S}^{n}$, and base point preserving self-homotopy equivalences of degree one of $\mathbb{S}^{n}$ respectively. Let $F/O$ be the homotopy fibre of the canonical map $BO\to BF$ between the classifying spaces for stable vector bundles and stable spherical fibrations (see  \cite[\S 2, \S 3]{MM79}) and $Top/O$ be the homotopy fibre of the canonical map $BO\to BTop$ between the classifying spaces for stable vector bundles and stable topological $\mathbb{R}^n$-bundles (see \cite[Theorem 10.1 Essay IV]{KS77}). In this paper all manifolds will be oriented and connected, and all homeomorphisms and diffeomorphisms are assumed to preserve orientation, unless otherwise stated. We use the notation $\mathbb{S}^m$ for the standard unit $m$-sphere in $\R^{m+1}$, and while making computations in homotopy classes, the notation $S^m$ for a space homotopy equivalent to $\mathbb{S}^m$. 

\subsection{Acknowledgements} The research of the first author was partially supported by NBHM project ref. no. 2/48(11)/2015/NBHM(R.P.)/R\&D II/3743.

 \section{Inertia groups of quaternionic projective spaces}\label{iner-quart}
In this section, we make some computations in the inertia groups of $\Hy P^n$. We show that the concordance inertia group is trivial when $n=5$, but non-trivial in many cases in high dimensions. We begin by recalling some preliminaries on exotic spheres and inertia groups. 

\begin{definition}
		\begin{itemize}
			\item[(a)] A homotopy $m$-sphere $\Sigma^m$ is a closed smooth manifold homotopy equivalent to  $\mathbb{S}^m$\footnote{By the $h$-cobordism theorem, such a sphere is also homeomorphic to the standard sphere $\mathbb{S}^m$.}.
			\item[(b)]A homotopy $m$-sphere $\Sigma^m$ is said to be exotic if it is not diffeomorphic to $\mathbb{S}^m$.
			\item[(c)] Two homotopy $m$-spheres $\Sigma^{m}_{1}$ and $\Sigma^{m}_{2}$ are said to be equivalent if there exists a diffeomorphism $f:\Sigma^{m}_{1}\to \Sigma^{m}_{2}$.
		\end{itemize}
\end{definition}

		The set of  diffeomorphism classes of homotopy $m$-spheres is denoted by $\Theta_m$. The class of $\Sigma^m$ is denoted by [$\Sigma^m$]. When $m\geq 5$, $\Theta_m$ forms an abelian group with group operation given by connected sum $\#$, and the zero element represented by the equivalence class  of $\mathbb{S}^m$. M. Kervaire and J. Milnor \cite{KM63}\footnote{Kervaire and Milnor used the relation $h$-cobordism which agrees with orientation preserving diffeomorphism for $m\geq 5$.} showed that each  $\Theta_m$ is a finite abelian group; in particular,  the following calculations will be needed : $\Theta_{m}\cong \mathbb{Z}_2$, where $m=8, 16$, $\Theta_{12}=0$ and  $\Theta_{20}\cong \mathbb{Z}_{24}$.

\begin{definition}
	Let $M$ be a smooth manifold. Let $(N,f)$ be a pair consisting of a smooth manifold $N$ together with a homeomorphism $f : N\to M$. Two such pairs $(N_1, f_1)$ and $(N_2, f_2)$ are concordant if there exists a diffeomorphism $g : N_1\to N_2$ such that the composition $f_2\circ g$ is topologically concordant to $f_1$, i.e., there exists a homeomorphism $F : N_1\times [0, 1]\to M\times [0, 1]$ such that $F_{|N_1\times 0}=f_1$ and $F_{|N_1\times 1}=f_2\circ g$. The set of all such concordance classes is denoted by $\mathcal{C}(M)$. Recall that there is a canonical homeomorphism $h_{can}:M^m\#\Sigma^m \to M^m$,  which induces a class $[(M^m\#\Sigma^m,h_{can})]$ in $\mathcal{C}(M)$. Note that $[(M^m\#\mathbb{S}^m,h_{can})]$ is the class of $(M^m, Id)$.
\end{definition}

\begin{definition} 
		Let $M^m$ be a closed smooth $m$-dimensional manifold. Let $I(M)$ be the subgroup of $\Theta_{m}$ consisting of all $\Sigma \in \Theta_{m}$ such that $M\#\Sigma$ is diffeomorphic to $M$.\\
Let $I_h(M)$ be the subgroup of $\Theta_{m}$ consisting of all $\Sigma \in \Theta_{m}$ such that
there exists a diffeomorphism $M\#\Sigma \to M$ which is homotopic to the canonical homeomorphism $h_{can}:M\#\Sigma \to M$. Let $I_c(M)$ be the subgroup of $\Theta_{m}$ consisting of all $\Sigma \in \Theta_{m}$ such that $(M\#\Sigma, h_{can})$ is concordant to $(M,Id)$.
\end{definition}
Note that $I_c(M)\subset I_h(M) \subset I(M)$, and these might not be equal \cite[Theorem 2.1]{Sch87}.  For $\Sp^n$ or an exotic sphere $\Sigma^n$, all these groups are trivial. For the space $\C P^n$, these groups are all equal \cite{Kas15}. 

The inertia groups  are not homotopy invariant. One has $I(\Sp^3 \times \Sigma^{10}) \neq I(\Sp^3 \times \Sp^{10})$ from \cite[Corollary 2, 3]{Kaw69}. %while from \cite{Bru71} one notes that $I_h(\Sp^1\times \C P^3) \cong \Z/7$, but for certian $6$-manifolds $P_j^6\simeq \C P^3$ for $j\equiv 1 \pmod 7$, $I_h(\Sp^1 \times \C P^3)= 0$.  
On the other hand, the group $I_c(M)$ is indeed homotopy invariant, and we recall the formulation below. 

We note from  \cite[p. 25 and 194]{KS77} that, for $m\neq 4$, $\CC (\Sp^m) \cong \Theta_m \cong [S^m, Top/O]$, and $\CC(M^m) \cong [M^m, Top/O]$.  Let $f_{M}:M^m\to S^m $  be a degree one map (which is well-defined up to homotopy). Composition with $f_{M}$ defines a homomorphism
$$f_{M}^*:[S^m, Top/O]\to [M^m ,Top/O],$$ 
and in terms of the identifications above
, $f_{M}^*$ becomes $[\Sigma^m]\mapsto [M^m\#\Sigma^m]$.  Therefore, the concordance inertia group $I_c(M)$ can be identified with $\Ker(f_M^\ast)$. 

In this paper, we are interested in the inertia groups of $\Hy P^n$. One notes from \cite[Corollary 3.2]{Kas17} that $I_h(\Hy P^n) = I_c(\Hy P^n)$ for $n\geq 2$. On the other hand, the inertia group of $\Hy P^n$ may be different from this. We use computations of $I_c(\Hy P^n)$ to deduce the results of the paper, and we note that this also implies results for the other inertia groups. For the concordance inertia group, we use the homotopy-theoretic description above for $M=\Hy P^{n}$. In \cite[Corollary 3.4.]{AF04}, it was proved that $f_{\Hy P^{n}}^*:[\mathbb{S}^{4n}, Top/O]\to [\Hy P^{n}, Top/O]$ is monic for $n = 2$ and $n = 4$. Since $\Theta_{12}=0$, so the case $n = 3$ is obvious. Hence, the concordance inertia group $I_c(\Hy P^{n})$ of  $\Hy P^{n}$ is trivial for $n\leq 4$; for $n=5$, the concordance inertia group is shown to have $2$-primary component of order at most $2$.  In contrary to the concordance and homotopy inertia groups, the inertia group of $\Hy P^{n}$ has a non-trivial element even for $n=2$. In \cite[Theorem A]{KS07}, it was proved that the inertia group of $\Hy P^{2}$ is isomorphic to the group of homotopy $8$-spheres $\Theta_8\cong \Z_2$. 

In this paper, we show that the inertia group of $\Hy P^{n}$ is non-trivial in many cases. For this, we prove the non triviality of the concordance inertia group of  $\Hy P^{n}$ by using the computations of the stable homotopy group of spheres. The first unresolved case is $n=5$ where we prove that the concordance inertia group is trivial. 
\begin{theorem}\label{iner}
The homomorphism $f_{\Hy P^{5}}^*:[S^{20}, Top/O]\to [\Hy P^{5}, Top/O]$  is monic.
	\end{theorem}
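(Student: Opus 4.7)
The strategy is cohomological, working through the Puppe cofiber sequence associated to the cell attachment $\Hy P^4 \hookrightarrow \Hy P^5$. Writing $\alpha:\mathbb{S}^{19}\to \Hy P^4$ for the attaching map of the top cell, the sequence extends as
\[
\mathbb{S}^{19}\xrightarrow{\alpha} \Hy P^4\longrightarrow \Hy P^5 \xrightarrow{f_{\Hy P^5}} \mathbb{S}^{20} \xrightarrow{\Sigma\alpha} \Sigma\Hy P^4,
\]
and applying $[-,Top/O]$ produces the exact sequence
\[
[\Sigma\Hy P^4,Top/O] \xrightarrow{(\Sigma\alpha)^\ast} [\mathbb{S}^{20},Top/O] \xrightarrow{f_{\Hy P^5}^\ast} [\Hy P^5,Top/O].
\]
Thus the theorem is equivalent to the vanishing of $(\Sigma\alpha)^\ast$, i.e.\ that every map $g:\Sigma\Hy P^4\to Top/O$ composes trivially with $\Sigma\alpha$.

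By \cite[Corollary 3.4]{AF04} (already invoked in the excerpt just above), the odd-primary component of $\Ker f_{\Hy P^5}^\ast$ vanishes and its $2$-primary component has order at most $2$. Since $\Theta_{20}\cong \Z_{24}$ contains a \emph{unique} element of order two (namely $12\Sigma_0$ for any generator $\Sigma_0$), what remains is to show that this single candidate does not lie in the image of $(\Sigma\alpha)^\ast$.

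The remaining step I would access by filtering $\Sigma\Hy P^4$ by its CW skeleton (cells in dimensions $1,5,9,13,17$) and studying $[\Sigma\Hy P^4,Top/O]$ through the obstruction-theoretic spectral sequence with $E_2^{p,q}=H^p(\Sigma\Hy P^4;\pi_q(Top/O))$. The image of $(\Sigma\alpha)^\ast$ in $\Theta_{20}$ then decomposes cell-by-cell into composites
\[
\mathbb{S}^{20}\xrightarrow{\Sigma\alpha} \Sigma\Hy P^4\longrightarrow \Sigma\Hy P^4/\mathrm{sk}_{4j-3}\simeq \mathbb{S}^{4j+1}\vee\cdots \longrightarrow Top/O,
\]
each of which is controlled by the well-known stable attaching data of $\Hy P^\bullet$, namely iterated quaternionic Hopf classes built from $\nu\in \pi^s_3$ and their Toda brackets. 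One then combines the $2$-primary structure of the stable stems in degrees $3,7,11,15,19$ with the image-of-$J$ description of $\Theta_{20,(2)}\cong \Z_8$, and verifies that no such composite can detect the unique order-two element.

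The main obstacle, as I see it, is this final piece of bookkeeping: pinning down the precise $2$-primary class of $\Sigma\alpha$ in the filtration on $\pi^s_{20}(\Sigma\Hy P^4)$ and confirming that its image in $\Theta_{20}$ is orthogonal to the Eells--Kuiper (equivalently Kervaire--Milnor $\hat{A}$-type) invariant that detects the order-two class in $\Z_8\subset \Theta_{20}$. Granted that check, the exact sequence above immediately yields the monomorphism statement of the theorem.
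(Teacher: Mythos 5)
Your setup is correct: applying $[-,Top/O]$ to the Puppe sequence reduces the theorem to showing that $(\Sigma\alpha)^\ast:[\Sigma\Hy P^4,Top/O]\to[\mathbb{S}^{20},Top/O]$ has trivial image, and this is exactly where the paper starts. But the proposal stops precisely where the actual content of the theorem begins. You explicitly defer the decisive step (``Granted that check\dots'') to an unperformed piece of ``bookkeeping,'' and that check is the whole theorem: the paper's proof consists of carrying it out. Concretely, the paper first passes from $Top/O$ to $F/O_{(p)}\simeq BSO_{(p)}\times Cok\,J_{(p)}$ using Kervaire--Milnor, reducing everything to $Cok\,J$ at $p=2$ and $p=3$. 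At $p=2$ the key inputs are (a) the surjectivity of $q^\ast:\{S^{17},Cok\,J_{(2)}\}\to\{\Sigma\Hy P^4,Cok\,J_{(2)}\}$ from \cite[Lemma 3.2]{AF04}, which collapses your ``cell-by-cell'' analysis to a single composite, and (b) the computation $Sq^4(y^4)=0$ in $H^\ast(\Hy P^5;\Z/2)$, which shows the relevant attaching map $S^{20}\to S^{17}$ has trivial mod $2$ Hopf invariant, hence is divisible by $2$ in $\pi_{20}(S^{17}_{(2)})\cong\Z/8$ and so acts by zero on the exponent-$2$ group $\{S^{17},Cok\,J_{(2)}\}$. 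Neither ingredient appears in your sketch, and without something equivalent the argument does not close.

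Two further inaccuracies. First, you cite \cite[Corollary 3.4]{AF04} as disposing of the odd-primary part of the kernel; the paper only attributes to that reference the bound on the $2$-primary component for $n=5$, and it carries out its own $3$-primary computation (showing $\{\Sigma\Hy P^4,S^0_{(3)}\}=0$ via $\PP^1(y^2)=y^3$ and the products $\alpha_1\beta_1$), so you cannot simply quote away the prime $3$. Second, your appeal to ``the image-of-$J$ description of $\Theta_{20,(2)}\cong\Z_8$'' is misleading: since $20\equiv 4\pmod 8$ the image of $J$ is trivial in this degree and all of $\Theta_{20}\cong\Z_{24}$ lies in the cokernel of $J$ --- which is exactly why the paper's reduction to $Cok\,J$ works, but also why an image-of-$J$ or Eells--Kuiper-type detection argument is not the right tool for the order-two class here.
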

	\begin{proof}
We proceed as in \cite{AF04} by considering the map $Top/O \to F/O$ and using the fact $F/O_{(p)} \simeq BSO_{(p)} \times Cok J_{(p)}$ which is a $p$-local splitting of infinite loop spaces  from \cite[Theorem 5.18]{MM79}. Recall that $Cok(J)$ is defined in \cite[Definition 5.16]{MM79} as the fibre of a map $F/O \to BSO$. It is an infinite loop space whose homotopy groups are isomorphic to the cokernel of the $J$-homomorphism, that is, $\pi_\ast (S^0)/Im(J)$. The splitting is as H-spaces if $p$ is an odd prime. So, we have a commutative diagram of Abelian groups
$$\xymatrix{  [\Hy P^5, Top/O]_{(p)} \ar[r]^{\alpha'} & [\Hy P^5, F/O]_{(p)}  & [\Hy P^5, Cok J_{(p)}] \ar[l]_{\beta'} \\ 
 [S^{20}, Top/O]_{(p)} \ar[r]^\alpha \ar[u]^{f^\ast_{\Hy P^5}} & [S^{20}, F/O]_{(p)} \ar[u]  & [S^{20}, Cok J_{(p)}]. \ar[l]_\beta \ar[u]^{f^\ast_{\Hy P^5}} \\}$$
In the above diagram we are also using $[-,Cok(J)_{(p)}] \cong [-,Cok(J)]_{(p)}$ which is true because $Cok(J)$ is an infinite loop space. We know from the results of Kervaire and Milnor (\cite{KM63}) that $\alpha$ is injective as $bP_{n+1}=0$ for $n$ even, and has the same image as $\beta$ which is the cokernel of the $J$-homomorphism. On account of the splitting $F/O_{(p)} \simeq BSO_{(p)} \times Cok(J)_{(p)}$ \cite[Theorem 5.18]{MM79}, the map $\beta'$ is injective.  Therefore, in order to prove that $[S^{20}, Top/O]\to [\Hy P^{5}, Top/O]$ is monic, it suffices to prove that $[S^{20}, Cok J_{(p)}]\to [\Hy P^{5}, Cok J_{(p)}]$ is monic for every prime $p$. As $\pi_{20}^s \cong \Z_{24}$, the primes we need to consider are $2$ and $3$. 

We start with the prime $2$. Consider the diagram 
$$\xymatrix{  \{S^{17}, Cok J_{(2)}\} \ar[d]^{q^\ast} \\ 
 \{\Sigma \Hy P^4, Cok J_{(2)}\} \ar[r]  & \{S^{20}, Cok J_{(2)}\} \ar[r]  & \{\Hy P^5, Cok J_{(2)}\} }$$
The bottom row is part of the long exact sequence for the cofibre $S^{19} \to \Hy P^4 \to \Hy P^5$, and the vertical map is induced by $q: \Hy P^4 \to \Hy P^4/ \Hy P^3 \simeq S^{16}$.  In \cite[Lemma 3.2]{AF04}, it was checked that $q^\ast$ is surjective. The composite 
$$ \{S^{17}, Cok J_{(2)}\} \stackrel{q^\ast}{\to}   \{\Sigma \Hy P^4, Cok J_{(2)}\} \to  \{S^{20}, Cok J_{(2)}\} $$
is induced by the map $\alpha : S^{20} \to S^{17}$ whose cofibre is the space $\Sigma (\Hy P^5/\Hy P^3)$. We note that the mod $2$ cohomology of $\Hy P^n$ is given by the ring $\Z/2[y]/(y^{n+1})$ with $|y|=4$, and it is easily computed that $Sq^4(y^4)=0$. The group $\pi_{20}(S^{17})_{(2)}$ is isomorphic to the stable homotopy group $\pi_3(S^0)_{(2)}\cong \Z/8$, which is generated by the class $\nu$. The class $\nu$ is detected by the cohomology operation $Sq^4$, which means that the operation $Sq^4$ is non-trivial on the mapping cone. It follows that $\alpha = 2\alpha'$ in the group $\pi_{20}(S^{17}_{(2)}) $. As $ 2\{S^{17}, Cok J_{(2)}\} =0 $, we deduce that $\alpha^\ast :  \{S^{17}, Cok J_{(2)}\}  \to  \{S^{20}, Cok J_{(2)}\}$ is $0$. Therefore, the kernel of $ \{S^{20}, Cok J_{(2)}\} \to  \{\Hy P^5, Cok J_{(2)}\} $ is trivial.     

Next we consider the prime $3$. We have the map $F_{(3)} \to Cok J_{(3)}$, and $Cok J_{(3)}$ is a summand of $F_{(3)}$. Therefore, it suffices to show $f^\ast_{\Hy P^5}:  [S^{20}, F_{(3)}] \to [\Hy P^5, F_{(3)}]$ is injective on the classes which come from cokernel of $J$. Finally, these homotopy classes may be computed using a single path component of $F$ and thus these may be computed using $\Omega^\infty S^0$, the infinite loop space associated to the sphere spectrum. Thus it suffices to compute $f^\ast_{\Hy P^5}: {\pi_{20}^s}\otimes \Z_{(3)}= \{ S^{20}, S^0_{(3)} \} \to \{ \Hy P^5, S^0_{(3)} \}$ as all the classes in ${\pi_{20}^s} \otimes \Z_{(3)}$ lie in  the cokernel of the $J$-homomorphism (observe that $20$ is not congruent to $-1 \pmod{4}$ and so the $3$-local $J$-homomorphism is $0$ in this degree). As in the case above, we compute $\{\Sigma \Hy P^4, S^0_{(3)}\}$ and prove that it is $0$ using computations in \cite[Table A.3.4]{Rav04}. 

We have $\{\Sigma \Hy P^1, S^0_{(3)}\} = {\pi_5^s} \otimes \Z_{(3)} = 0$, and therefore, the exact sequence 
$$ \{S^9, S^0_{(3)}\} \to \{\Sigma \Hy P^2, S^0_{(3)}\} \to  \{\Sigma \Hy P^1, S^0_{(3)}\} $$
together with the fact ${\pi_9^s} \otimes \Z_{(3)} = 0$  implies that $\{\Sigma \Hy P^2, S^0_{(3)}\} =0$. Next we have the exact sequence 
$$ \{\Sigma^2 \Hy P^2, S^0_{(3)}\}\to  \{S^{13}, S^0_{(3)}\} \to \{\Sigma \Hy P^3, S^0_{(3)}\} \to  \{\Sigma \Hy P^2, S^0_{(3)}\} $$
in which the right hand term is $0$. The group $\pi_{13}^s \otimes \Z_{(3)} \cong \Z/3\{\alpha_1\beta_1\}$ from \cite[Table A.3.4]{Rav04}. For computing the term $ \{\Sigma^2 \Hy P^2, S^0_{(3)}\}$ we note $ \{\Sigma^2 \Hy P^1, S^0_{(3)}\} = \pi_6^s \otimes \Z_{(3)} =0$ and thus $ \{\Sigma^2 \Hy P^2, S^0_{(3)}\} \cong \pi_{10}^s \otimes \Z_{(3)} \cong \Z/3\{\beta_1\}$. Thus we have the diagram
$$\xymatrix{  \{S^{10}, S^0_{(3)}\} \ar[d]^{q^\ast}_{\cong} \\ 
 \{\Sigma^2 \Hy P^2, S^0_{(p)}\} \ar[r]  & \{S^{13}, S^0_{(3)}\}  }$$
such that the composite computes the map in the exact sequence above. This is induced by the map $S^{13} \to S^{10}$ whose cofibre is $\Sigma^2(\Hy P^3 /\Hy P^1)$. We compute the mod $3$ cohomology of $\Hy P^n$ as  $\Z/3[y]/(y^{n+1})$ with $|y|=4$, and so for the Steenrod power operation $\PP^1$, $\PP^1(y^2)=y^3$. As the operation $\PP^1$ detects $\alpha_1$ in the stable homotopy groups of spheres, it follows that the map $S^{13} \to S^{10}$ is a non-trivial multiple of $\alpha_1$. Therefore $\{\Sigma^2 \Hy P^2, S^0_{(3)}\}\to  \{S^{13}, S^0_{(3)}\} $ takes $\beta_1$ to $\alpha_1\beta_1$ and is thus an isomorphism. Hence $ \{\Sigma \Hy P^3, S^0_{(3)}\}=0$. Finally from the exact sequence 
$$ \{S^{17}, S^0_{(3)}\} \to \{\Sigma \Hy P^4, S^0_{(3)}\} \to  \{\Sigma \Hy P^3, S^0_{(3)}\} $$
and the fact that $\pi_{17}^s\otimes \Z_{(3)}=0$, we deduce $\{\Sigma \Hy P^4, S^0_{(3)}\}=0$. This completes the proof of the theorem. 
\end{proof}
From the above result, we have the following:	
\begin{corollary}\label{exotic}
For any two elements $\Sigma_1, \Sigma_2\in \Theta_{20}$,  $\Hy P^{5}\#\Sigma_1$ is concordant to $\Hy P^{5}\#\Sigma_2$ if and only if $\Sigma_1=\Sigma_2$. In particular, the concordance inertia group $I_c(\Hy P^{5})=0$.
\end{corollary}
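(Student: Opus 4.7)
The plan is to derive this directly from Theorem \ref{iner} by unpacking the homotopy-theoretic translation of concordance established in the paragraphs preceding that theorem. I would first recall the identifications $\mathcal{C}(\Sp^{20}) \cong \Theta_{20} \cong [\Sp^{20}, Top/O]$ and $\mathcal{C}(\Hy P^5) \cong [\Hy P^5, Top/O]$, under which the map $f_{\Hy P^5}^{\ast}$ is precisely the assignment $[\Sigma^{20}] \mapsto [\Hy P^5 \# \Sigma^{20}]$. Under these identifications, the statement $\Hy P^5 \# \Sigma_1$ is concordant to $\Hy P^5 \# \Sigma_2$ translates to the equation $f_{\Hy P^5}^{\ast}([\Sigma_1]) = f_{\Hy P^5}^{\ast}([\Sigma_2])$ in $[\Hy P^5, Top/O]$.

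Next I would invoke Theorem \ref{iner}, which asserts that $f_{\Hy P^5}^{\ast}$ is a monomorphism. Since this map is a homomorphism between abelian groups of homotopy classes, injectivity lets us cancel and conclude $[\Sigma_1] = [\Sigma_2]$ in $\Theta_{20}$, which is the same as saying $\Sigma_1$ and $\Sigma_2$ are diffeomorphic (and hence equal as elements of $\Theta_{20}$). The reverse implication is immediate.

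For the "in particular" clause, I would recall from the discussion just before the statement of Theorem \ref{iner} that $I_c(\Hy P^5)$ is by definition the kernel of $f_{\Hy P^5}^{\ast}$. Since Theorem \ref{iner} says this kernel is trivial, the concordance inertia group vanishes.

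There is no real obstacle here; the work has all been absorbed into Theorem \ref{iner}, and the corollary is simply a dictionary translation from the language of $[-, Top/O]$ back to the geometric language of concordance classes of smooth structures obtained by connected sum with exotic spheres.
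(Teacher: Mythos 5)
Your proposal is correct and is exactly the argument the paper intends: the corollary is stated without proof as an immediate consequence of Theorem \ref{iner}, relying on the identifications $\mathcal{C}(\Hy P^5)\cong[\Hy P^5, Top/O]$, the description of $f_{\Hy P^5}^{\ast}$ as $[\Sigma]\mapsto[\Hy P^5\#\Sigma]$, and the identification $I_c(\Hy P^5)=\Ker(f_{\Hy P^5}^{\ast})$, all of which are set up in the paragraphs preceding the theorem. Nothing is missing.
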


Computations such as Theorem \ref{iner} have geometric applications along the lines of \cite{AF04}. We may start with the quaternionic hyperbolic manifold $N$ of dimension $20$ given by \cite{Bor63} which has a finite-sheeted cover $M$ that have a non-zero tangential map to $\Hy P^5$ \cite[Theorem 5.1]{Oku02}. Now from Okun's stronger result \cite[Theorem 3.6]{Oku02} and Theorem \ref{iner}, we have the following. 
\begin{theorem} \label{exohyp}
There exist twelve exotic spheres $\{\Sigma_i : ~~i=1~{\rm to}~ 12\}\subset \Theta_{20}$ and a closed quaternionic hyperbolic manifold $M^{20}$ of quaternion dimension $5$ such that the following is true.
\begin{itemize}
\item[(i)] The manifolds $M^{20}$, $\{M\#\Sigma_i : ~~i=1~{\rm to}~ 12\}$ are pairwise non-diffeomorphic.
\item[(ii)] Each of the manifolds $M\#\Sigma_i $ supports a Riemannian metric whose sectional curvatures are all negative.
\end{itemize}
For every closed quaternionic hyperbolic manifold $N$, there is a finite sheeted cover which satisfies the conclusions for $M$ above. 
\end{theorem}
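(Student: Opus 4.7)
My plan is to combine Corollary~\ref{exotic} with the tangential map machinery of Okun~\cite{Oku02} and the warping construction of Farrell--Jones, proceeding in three stages.

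First, for any closed quaternionic hyperbolic manifold $N$ of quaternion dimension $5$, I would invoke Okun's existence theorem to produce a finite sheeted cover $p : M \to N$ together with a tangential map $f : M \to \Hy P^5$ of non-zero degree. Next, I would apply \cite[Lemma 3.5, Theorem 3.6]{Oku02}, which relates the natural $\Theta_{20}$-action on the concordance classes $\mathcal{C}(\Hy P^5)$ to the action on (diffeomorphism classes of) smooth structures on $M$. Corollary~\ref{exotic} tells us that the $\Theta_{20}$-action on $\mathcal{C}(\Hy P^5)$ is free, so all $24$ elements of $\Theta_{20}$ produce pairwise non-concordant smooth structures on $\Hy P^5$. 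Okun's comparison transports this into a large family of pairwise non-diffeomorphic manifolds of the form $M\#\Sigma$ for $\Sigma\in\Theta_{20}$; a careful accounting of the degree of $f$ and of the inertia group of $\Hy P^5$ then isolates twelve exotic spheres $\Sigma_1,\ldots,\Sigma_{12}$ for which $M, M\#\Sigma_1,\ldots,M\#\Sigma_{12}$ are pairwise non-diffeomorphic, establishing~(i).

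For part~(ii), each $M\#\Sigma_i$ is homeomorphic to $M$ and differs from $M$ only inside a small coordinate ball. Applying the Farrell--Jones warping construction, valid for any closed negatively curved locally symmetric manifold of dimension at least $5$, then produces on each $M\#\Sigma_i$ a smooth Riemannian metric of strictly negative sectional curvature. The concluding assertion of the theorem---that a suitable cover exists for every closed quaternionic hyperbolic $N$---follows because the entire argument is applied to an arbitrary cover of an arbitrary such $N$.

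The main obstacle will be to actually extract twelve, rather than some smaller number, of exotic spheres yielding non-diffeomorphic manifolds. This requires the detailed comparison of \cite[Theorem~3.6]{Oku02} together with control over the degree of the tangential map $f$ produced by Okun's construction: one must exhibit a cover $M$ for which this degree is not too divisible into $|\Theta_{20}|=24$. Since Theorem~\ref{iner} already ensures that the full $\Theta_{20}$ acts freely on $\mathcal{C}(\Hy P^5)$, the remaining challenge reduces to an explicit computation at the level of the tangential-map pullback, which is where the input of Theorem~\ref{iner} is decisive.
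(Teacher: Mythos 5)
Your overall architecture coincides with the paper's: the paper states Theorem \ref{exohyp} with no separate proof, relying entirely on the preceding paragraph, which (exactly as you propose) feeds Corollary \ref{exotic} into Okun's tangential-map machinery \cite[Lemma 3.5, Theorem 3.6]{Oku02} to transfer the distinct concordance classes on $\Hy P^5$ to a finite cover $M$ of the given quaternionic hyperbolic manifold, and then invokes the warping construction along the lines of \cite{AF04} for the negatively curved metrics. On the level of strategy you have reproduced the intended argument.

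The one place where your plan goes astray is your diagnosis of where the number twelve comes from. It has nothing to do with the degree of the tangential map being ``not too divisible into $24$'': Okun's results already give that $M\#\Sigma$ and $M\#\Sigma'$ are non-concordant whenever $\Hy P^5\#\Sigma$ and $\Hy P^5\#\Sigma'$ are, i.e.\ (by Corollary \ref{exotic}) whenever $\Sigma\neq\Sigma'$, so all $24$ concordance classes on $M$ are distinct and no further control of the degree is needed. The drop from $24$ concordance classes to $13$ diffeomorphism classes occurs at the next step: by Mostow rigidity every homotopy self-equivalence of $M$ is homotopic to an isometry, and composing with an orientation-reversing one carries the concordance class of $M\#\Sigma$ to that of $M\#(-\Sigma)$, so a diffeomorphism $M\#\Sigma\to M\#\Sigma'$ only forces $\Sigma'=\pm\Sigma$. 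Since $\Theta_{20}\cong\Z_{24}$ and its nonzero elements fall into exactly twelve orbits under $\Sigma\mapsto-\Sigma$ (eleven pairs $\{k,-k\}$ together with the singleton $\{12\}$), choosing one representative from each orbit yields $\Sigma_1,\dots,\Sigma_{12}$ with $M,M\#\Sigma_1,\dots,M\#\Sigma_{12}$ pairwise non-diffeomorphic. A second, minor point: the warping construction producing the negatively curved metrics requires the injectivity radius of the cover to be sufficiently large, so the finite cover must be chosen to satisfy this in addition to carrying Okun's tangential map; your phrasing suggests the construction applies to an arbitrary cover as is.
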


We have thus observed that for $n\leq 5$, $I_c(\Hy P^5)=0$.  However, this is a phenomenon only in low dimensions, as it is possible to construct a fairly large number of non-trivial elements in the inertia groups of high dimensional quaternionic projective spaces. The technique for constructing these has been used in \cite[Theorem 3.9]{BK} for complex projective spaces in dimensions $4n+2$. 

We use the result from \cite{CNL96} :  For $p\geq 7$ the classes $\alpha_1\beta_1^r\gamma_t$ are non trivial in the stable homotopy groups of $S^0$ for $2\leq t \leq p-1$ and $r\leq p-2$ (in dimension $n(t,p,r)= [2(tp^3 - t - p^2) +2r(p^2 - 1 - p) -2]$).  With these assumptions $\beta_1^r\gamma_t$ is also non-trivial in dimension $n(t,p,r)-(2p -3)$. Note that whenever $r$ is even, $4 \mid n(t,p,r)$, and if $p\nmid t+r$, $p\nmid n(t,p,r)-2(p-1)$.  

\begin{theorem}\label{inerhigh}
Suppose that $p$ is a prime $\geq 7$, $2\leq t \leq p-1$ and $r\leq p-2$. Assume that $r$ is even and $p$ does not divide $t+r$. Under these assumptions, the map   
$$[S^{n(t,p,r)}, Top/O] \to [\Hy P^{\frac{n(t,p,r)}{4}}, Top/O]$$
induced by the degree one map $\Hy P^{\frac{n(t,p,r)}{4}} \to S^{n(t,p,r)}$ has non-trivial $p$-torsion in the kernel. 
\end{theorem}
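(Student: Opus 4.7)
The plan is to adapt the strategy of Theorem~\ref{iner}. Using the splitting $F/O_{(p)} \simeq BSO_{(p)} \times \mathrm{Cok}\,J_{(p)}$ at the odd prime $p\geq 7$, the statement reduces to exhibiting a non-trivial $p$-torsion element in the kernel of
$$f^\ast\colon \pi_n^s \otimes \Z_{(p)} \to \{\Hy P^{n/4}, S^0\}_{(p)},$$
where $n=n(t,p,r)$. From the Puppe sequence of $\Hy P^{n/4-1} \hookrightarrow \Hy P^{n/4} \to S^n$, this kernel equals the image of
$$\partial^\ast\colon \{\Sigma\Hy P^{n/4-1}, S^0\}_{(p)} \to \pi_n^s \otimes \Z_{(p)}$$
induced by the attaching map of the top cell, so I aim to construct $\psi$ with $\partial^\ast(\psi)$ equal to a unit multiple of $\alpha_1\beta_1^r\gamma_t$ (which is non-zero in $\pi_n^s\otimes \Z_{(p)}$ by the hypotheses and \cite{CNL96}).

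Put $k = n/4 - (p-1)/2$. The hypothesis that $r$ is even ensures $n/4 \in \Z$, and a direct computation mod $p$ shows that the hypothesis $p\nmid t+r$ is equivalent to $p\nmid k$. Consider the stable quotient $Q = \Hy P^{n/4-1}/\Hy P^{k-1}$, with $(p-1)/2$ cells $C_0, \ldots, C_{(p-3)/2}$ in dimensions $4k, 4k+4, \ldots, n-4$. Inductively splitting off $C_0 \vee \ldots \vee C_{i-1}$ and then attaching $C_i$, the obstruction lies in $\bigoplus_{j<i}\pi_{4(i-j)-1}^s$; each $4(i-j)-1$ takes a value in $\{3,7,\ldots,2p-7\}$, which is strictly less than $2p-3$. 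These obstructions vanish $p$-locally, since $\pi_{2p-3}^s$ is the first non-trivial $p$-primary stable stem. Thus $Q$ splits $p$-locally as a wedge of spheres, producing a retraction $\rho\colon Q \to S^{4k}$. Define
$$\psi\colon \Sigma\Hy P^{n/4-1}\xrightarrow{\Sigma\pi}\Sigma Q\xrightarrow{\Sigma\rho}S^{4k+1}\xrightarrow{\beta_1^r\gamma_t}S^0,$$
where $\beta_1^r\gamma_t \in \pi_{4k+1}^s\otimes \Z_{(p)}$ is non-trivial by \cite{CNL96}.

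Then $\partial^\ast(\psi) = \beta_1^r\gamma_t\circ \Sigma\sigma$ where $\sigma\colon S^{n-1} \to \Hy P^{n/4-1} \to Q \to S^{4k}$ lies in $\pi_{2p-3}^s\otimes \Z_{(p)} \cong \Z/p\cdot \alpha_1$. To identify $\sigma$, I further collapse all the intermediate cells of $Q$ inside $\Hy P^{n/4}/\Hy P^{k-1}$: this realises $\sigma$ as the attaching map of the top cell in a stable 2-cell complex $S^{4k}\cup_\sigma e^n$ whose mod $p$ cohomology is $\Z/p\{y^k,y^{n/4}\}$, with $\PP^1(y^k) = 2k\,y^{n/4}$. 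Since $\PP^1$ detects $\alpha_1$ on such 2-cell complexes and $p\nmid 2k$, we conclude $\sigma$ is a unit multiple of $\alpha_1$, whence $\partial^\ast(\psi)$ is a non-zero multiple of $\alpha_1\beta_1^r\gamma_t$, producing the required $p$-torsion element in the kernel. The main technical obstacle is to establish rigorously the $p$-local stable splitting of $Q$ (so that $\rho$ exists) and to correctly identify $\sigma$ via the $\PP^1$ detection on the quotient 2-cell complex; the rest of the argument parallels Theorem~\ref{iner} and \cite[Theorem 3.9]{BK}.
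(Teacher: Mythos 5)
Your proposal is correct and follows essentially the same route as the paper's proof: both reduce the question to $Cok\,J_{(p)}$ (equivalently the $p$-local sphere spectrum), split the stunted projective space $\Hy P^{N}/\Hy P^{M-1}$ $p$-locally and stably into spheres plus one two-cell piece using the vanishing of the $p$-local stable stems in degrees strictly between $0$ and $2p-3$, detect the surviving attaching map as a unit multiple of $\alpha_1$ via $\PP^1$ and the condition $p\nmid M$, and invoke Lee's non-vanishing of $\alpha_1\beta_1^r\gamma_t$. The only cosmetic difference is that you produce an explicit class on $\Sigma\Hy P^{N-1}$ whose image under the Puppe boundary map is $\alpha_1\beta_1^r\gamma_t$, whereas the paper reads off the kernel directly from the cofibre sequence of the summand $S^{4M}\cup_{\alpha_1}e^{4N}$; these are the two ends of the same exact sequence.
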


\begin{proof}
We note that in $H^\ast (\Hy P^\infty;\Z/p)\cong \Z/p[y]$, the Steenrod operation $\PP^1(y^k)\neq 0$ whenever $p\nmid k$. Once we assume the stated hypothesis, it follows that $\PP^1(y^{\frac{n(t,p,r)}{4} - \frac{p-1}{2}}) \neq 0$. 

Let $N=\frac{n(t,p,r)}{4}$ and $M=\frac{n(t,p,r)}{4} - \frac{p-1}{2}$. We consider the map $q:\Hy P^N \to S^{4N}$ which quotients out the $(N-1)$-skeleton. This is the usual degree one map from $\Hy P^N$ to $S^{4N}$ up to homotopy. We deduce that for the map $q^\ast : [S^{4N}, Cok J_{(p)}] \to [\Hy P^N, Cok J_{(p)}]$, $\Ker(q^\ast)$ has non-trivial $p$-torsion. From the proof of Theorem \ref{iner}, observe that it suffices to prove this. We start by noting that $q$ splits into a composite 
$$\Hy P^N \stackrel{q_1}{\to} \Hy P^N/\Hy P^{M-1} \stackrel{q_2}{\to} S^{4N}$$
so that it suffices to verify that $\Ker(q_2^\ast)$ has non-trivial $p$-torsion. The space $\Hy P^N/ \Hy P^{M-1}$ has a CW-complex structure described as $S^{4M}\cup e^{4M+4} \cup \cdots \cup e^{4N}$. Working $p$-locally and in the stable homotopy category we work out  the attaching maps which are of the form $S^{4M + 4k -1} \to  S^{4M}\cup e^{4M+4} \cup \cdots \cup e^{4M+4(k-1)}$ for $1\leq k \leq \frac{p-1}{2}$. We note that on stable homotopy classes one has long exact sequences for any $X$ and a cell $e^m$ attached to $X$, 
$$\cdots \{S^r, X \} \to \{ S^r, X\cup e^m\} \to \{ S^r, S^m \} \to \{ S^r, \Sigma X \} \cdots $$
We note that for $1\leq k \leq \frac{p-1}{2}$, $\{ S^{4M+4k -1}, S^{4M + 4s}\}_{(p)}$ equals $0$, unless $k=\frac{p-1}{2}$ and $s=0$. Therefore  the map $S^{4M + 4k -1} \to  S^{4M}\cup e^{4M+4} \cup \cdots \cup e^{4M+4(k-1)}$ is homotopically trivial unless $k=\frac{p-1}{2}$. For the attaching map $S^{4N-1}$ the map goes down to the sphere $S^{4M}$, that is it is homotopic to a composite $S^{4N-1} \to S^{4M} \to  S^{4M}\cup e^{4M+4} \cup \cdots \cup e^{4N-4}$. Therefore, it follows that 
$$\Hy P^N/ \Hy P^{M-1}_{(p)} \simeq (S^{4M} \cup e^{4N})_{(p)} \vee S^{4M+4}_{(p)} \vee \cdots \vee S^{4N-4}_{(p)}$$
We denote $X= (S^{4M}\cup e^{4N})_{(p)}$ in the above, so that it suffices to prove that the kernel of $q_3^\ast$ has non-trivial $p$-torsion where $q_3:X \to S^{4N}$. The attaching map $S^{4N-1} \to S^{4M}$ must be a non-trivial multiple of $\alpha_1$ as the operation $\PP^1$ carries the cohomology generator in degree $4M$ to the generator in degree $4N$, as $p \nmid M$. This is where we use $\PP^1(y^{\frac{n(t,p,r)}{4} - \frac{p-1}{2}}) \neq 0$.  Thus $X\simeq S^{4M} \cup_{\alpha_1} S^{4N}$. Now we have the long exact sequence 
$$\cdots  \to [S^{4M+1}, Cok J_{(p)}] \to  [S^{4N}, Cok J_{(p)}] \to  [X, Cok J_{(p)}] \cdots $$
where the left map is induced by multiplication by $\alpha_1$. We proceed as in the proof of the $p=3$ case of  Theorem \ref{iner}, and use that $Im J_{(p)}$ is trivial in degrees which are $0, 1 \pmod 4$. Thus it suffices to check that the left arrow hits a non-trivial element when $Cok J_{(p)}$ is replaced by the sphere spectrum $ S^0_{(p)}$. Here, we know that $\beta_1^r \gamma_t$ is carried to $\alpha_1 \beta_1^r \gamma_t$ by the discussion preceeding the Theorem, and thus we are done. 
\end{proof}

\begin{remark} \rm{
Theorem \ref{inerhigh} shows that under the given hypothesis, the concordance inertia group $I_c(\Hy P^\frac{n(t,p,r)}{4})$ has non-trivial $p$-torsion. Observe that the hypothesis may be easily satisfied. The first example arises when $t=2,r=2$ which says that $I_c(\Hy P^{p^3 -p + \frac{p^2-5}{2}})$ has non-trivial $p$-torsion. Specializing further to $p=7$ we get that $I_c(\Hy P^{358})$ has non-trivial $7$-torsion. It follows that in these cases the homotopy inertia group and the inertia groups are also non-trivial. 
}
\end{remark}

\section{Smooth Structures on Quaternionic Projective Spaces}\label{smstr-quart}
 Computations of the group $\CC(M)$ are very important in the study of classification of manifolds and smooth structures, and from the identification $\CC(M) \cong [M,Top/O]$, this may be computed using homotopy theory.  In this section, we study $\CC(\Hy P^n)$ for low values of $n$. 

In the case $n=2$, it was proved in \cite[Theorem 2.7]{Kas16} that 
$$\CC (\Hy P^2)\cong \{[(\Hy P^2\#\Sigma^8,h_{can})], [(\Hy P^2,Id)] \} \cong\Z_2,$$ 
where $\Sigma^8\in \Theta_8$ is the exotic $8$-sphere.  In the following, we are interested in  the group $\CC (\Hy P^n)$ for $n\geq 3$.

For the computations below, we  use the cofiber sequence  
$$S^{4n-1} \stackrel{p}{\to} \Hy {P}^{n-1}\stackrel{i} {\to} \Hy P^{n} \stackrel{f_{\Hy P^{n}}} {\to} S^{4n}$$
which induces the long exact sequence
\begin{equation}\label{longG}
\cdots \to [\Sigma \Hy P^{n-1}, Top/O] \to [S^{4n}, Top/O] \stackrel{f^{*}_{\Hy P^{n}}}{\to}[\Hy P^{n}, Top/O] \stackrel{i^{*}}{\to}[\Hy P^{n-1}, Top/O] \cdots .
\end{equation}
Using these techniques, we compute
 \begin{theorem}\label{main} 
 \begin{itemize}
\item[(i)] $\CC(\Hy P^3)$ has two concordance classes and therefore is $\cong \Z_2$. These classes induce the two different concordance classes on $\Hy P^2$ via $i^\ast$ in \eqref{longG}. \\
\item[(ii)] $\CC(\Hy P^4)$ has exactly two concordance classes. The two classes are obtained as  $$\left \{ [(\Hy P^4\#\Sigma,h_{can})]~ |~\Sigma\in \Theta_{16} \right \},$$ where $h_{can}:\Hy P^4\#\Sigma \to \Hy P^4$ is the canonical homeomorphism.
\item[(iii)]  There is a split short exact sequence $$0 \to \Theta_{20} \to \mathcal{C}(\mathbb{H}P^5)\to \mathcal{C}(\mathbb{H}P^4)\to 0,$$ where $\Theta_{20}\cong \mathbb{Z}_{24}$.
\end{itemize}
  \end{theorem}
  \begin{proof}
 We start by proving (i). In the exact sequence \eqref{longG}, we use the fact  $[S^{12}, Top/O]\cong \Theta_{12}=0$, to deduce that  
$$i^{*}:[\Hy  P^{3}, Top/O] {\to} [\Hy  P^{2}, Top/O]$$ 
is a monomorphism.  Since $f^{*}_{\Hy  P^{2}} : [S^{8}, Top/O] \to [\Hy P^{2}, Top/O]$ is an isomorphism and  $[S^{8}, Top/O]\cong \Theta_8\cong  \Z_2$, the non-trivial element in $[\Hy P^{2}, Top/O]$ is represented by a map 
$$g:\Hy P^{2} \stackrel{f_{\Hy P^{2}}}{\to} S^{8} \stackrel{\Sigma}{\to} Top/O,$$
 where $\Sigma:  S^{8}\to Top/O$ represents the exotic $8$-sphere in $\Theta_8$. Therefore, the effect of $p^{*}: [\Hy  P^{2}, Top/O]{\to} [S^{11},Top/O]\cong \Z_{992}$ on the homotopy class $[g]$ is  represented by the map 
$$S^{11}\stackrel{p}{\to} \Hy P^{2}\stackrel{f_{\Hy P^{2}}}{\to} S^{8} \stackrel{\Sigma}{\to} Top/O.$$ 
Now we will use the fact that the composition $f_{\Hy P^{2}}\circ p:S^{11}\to  S^{8}$ is multiplication by $2\nu_2$ \cite[page 38]{Jam76}, where  $\nu_2 = \Sigma^4 \nu \in \pi_{11}(S^8)$ is the $4$-fold suspension of the Hopf map $\nu$. As $2 [\Sigma]=0$, we have 
$$p^\ast([g]) = (2\nu_2) ([\Sigma]) = \nu_2(2[\Sigma]) = 0$$
where the second equality is derived from the fact that $2$ commutes with $\nu$ in the stable range. It follows that  $p^{*}: [\Hy P^{2}, Top/O]{\to} [S^{11},Top/O]$ is the zero map. Therefore the map 
$$i^{*}:[\Hy P^{3}, Top/O]{\to}[\Hy P^{2}, Top/O]$$ 
is an isomorphism and hence $\CC(\Hy P^3)\cong \Z_2$.

 Now consider the case (ii), that is, $n=4$. We prove that the map $[\Hy P^3, Top/O] \to [S^{15}, Top/O]$ induced by the attaching map of $\Hy P^4$ is injective.  Since $[\Hy P^3, Top/O]\cong \Z/2$ we work $2$-locally. We use the commutative diagram 
\begin{equation} \label{conc}
\xymatrix{  [\Hy P^3, Top/O]_{(2)} \ar[r]^{\alpha} \ar[d] & [\Hy P^3, F/O]_{(2)} \ar[d] & [\Hy P^3, Cok J_{(2)}] \ar[l]_{\beta} \ar[d] \\ 
 [S^{15}, Top/O]_{(2)} \ar[r]^\alpha  & [S^{15}, F/O]_{(2)}   & [S^{15}, Cok J_{(2)}]. \ar[l]_\beta } 
 \end{equation}
We have the splitting $F/O_{(p)} \simeq BSO_{(p)} \times Cok(J)_{(p)}$ \cite[Theorem 5.18]{MM79}, so that the maps denoted by $\beta$ are injective. We observe that in the top row, the map $\alpha$ is  injective and has the same image as $\beta$. For this, consider the commutative diagram 
$$\xymatrix{  [\Hy P^3, Top/O]_{(2)} \ar[r]^{\alpha} \ar[d]^{\cong} & [\Hy P^3, F/O]_{(2)} \ar[d] & [\Hy P^3, Cok J_{(2)}] \ar[l]_{\beta} \ar[d]^{\cong} \\ 
 [\Hy P^2, Top/O]_{(2)} \ar[r]^\alpha  & [\Hy P^2, F/O]_{(2)}   & [\Hy P^2, Cok J_{(2)}]. \ar[l]_\beta \\
  [S^8, Top/O]_{(2)} \ar[r]^{\alpha} \ar[u]^{\cong} & [S^8, F/O]_{(2)} \ar[u] & [S^8, Cok J_{(2)}]. \ar[l]_{\beta} \ar[u]^{\cong} \\  } $$
The left vertical isomorphisms follow from the discussion above. The right vertical isomorphisms follow from the fact that $\pi_k Cok(J)_{(2)}=0$ if $k=4,5, 11, 12$  \cite[Table A.3.3]{Rav04}. The $\alpha$ in the bottom row is injective and image of $\alpha$ equals the image of $\beta$ as in the proof of Theorem \ref{iner}. As $\pi_{12}(F/O)_{(2)} \cong \pi_{12} BSO_{(2)}$ which is torsion-free, the kernel of the middle vertical map is torsion-free and thus intersects trivially with the images of $\alpha$ and $\beta$. This completes the intended observation, so using \eqref{conc} it suffices to prove injectivity of $[\Hy P^3, Cok(J)_{(2)}] \to [S^{15},Cok(J)_{(2)}]$. We note that $\pi_4 (Cok J_{(2)})=0$. Therefore, it suffices to compute the kernel of the map
$$q^\ast : [\Hy P^3/\Hy P^1, Cok(J)_{(2)}] \to [S^{15}, Cok(J)_{(2)}]$$
with $q$ induced by $S^{15} \to \Hy P^3 \to \Hy P^3/\Hy P^1$. We use stable homotopy theory to complete the computation -- we assume that the spaces used below are actually suspension spectra and we use the notation of stable stems from \cite{Rav04}. We are allowed to do this because the space $Cok(J)_{(2)}$ is an infinite loop space. 

The space $\Hy P^3/ \Hy P^1$ is a cell complex with $2$-cells of dimensions $8$ and $12$. As $\pi_{12} Cok(J)_{(2)} = 0$ and $\pi_{11}Cok(J)_{(2)}=0$ \cite[Table A.3.3]{Rav04} we have that the group $[\Hy P^3/\Hy P^1, Cok(J)_{(2)}]$  is $\Z/2$ generated by the class $c_0$. From \cite[Page 38]{Jam76}, we note that the attaching map of the $12$-cell onto the $8$-cell is $2h_2$ and, we also note that the composite 
$$S^{15} \to \Hy P^3/\Hy P^1 \to \Hy P^3/\Hy P^2 \simeq S^{12}$$
is $3h_2$. Therefore, the map $S^{15} \to \Hy P^3/\Hy P^1$ (in the category of spectra) is described by a map $\phi: S^{15} \to S^{12}$ which is $\simeq 3h_2$, together with a specific choice of null-homotopy of $2h_2 \circ \phi$. This means that the pullback  $q^\ast c_0$ is an element of the Toda bracket $\langle 3h_2,2h_2, c_0\rangle$. Note that the indeterminacy of the bracket is $\pi_7. c_0 + 3h_2 . \pi_{12}$ which is  $0$ as  $\pi_{12} =0$, and the product of $c_0$ with $h_3$ is $0$. 

Thus it remains to compute the Toda bracket $\langle 3h_2, 2h_2, c_0 \rangle$ which is an odd multiple of $\langle h_2, h_0h_2, c_0 \rangle$. Now we can compute using relations described in \cite{Ko96}. We note the generator of $\pi_{15}(Cok(J)_{(2)})$ is $h_1 d_0$. Now we have the following two Toda brackets  : $d_0= \langle h_0,h_1,h_2,c_0 \rangle$ \cite[Page 250]{Ko96} and $h_0h_2 = \langle h_1,h_0,h_1 \rangle$ \cite[Page 179]{MT68}. 
We can now make some manoevres with Toda brackets
$$h_1d_0= h_1\langle h_0,h_1,h_2,c_0 \rangle  \subset \langle \langle h_1,h_0,h_1\rangle,h_2,c_0 \rangle$$ 
by \cite[Proposition 5.7.4 c]{Ko96}. Thus the above computations imply
$$ \langle \langle h_1,h_0,h_1\rangle,h_2,c_0\rangle  = \langle h_0h_2,h_2,c_0\rangle  \subset \langle h_2, h_0h_2, c_0 \rangle$$ 
where the last equation is implied by \cite[Proposition 5.7.4 b]{Ko96}. Therefore, $h_1d_0$ is an element of $\langle h_2, h_0h_2, c_0 \rangle$. Now, as the indeterminacy is trivial, this must be the same element as the image of $c_0$ in $[\Hy P^3, Cok(J)_{(2)}]$ which implies the required injectivity. 

Since the maps $f^{*}_{\Hy P^{4}}:[S^{16}, Top/O]\to [\Hy P^{4}, Top/O]$ and $p^{*}: [\Hy P^{3}, Top/O]{\to} [S^{15},Top/O]$ are injective, it follows from the exact sequence \eqref{longG}, that the map $f^{*}_{\Hy P^{4}}:[S^{16}, Top/O]\to [\Hy P^{4}, Top/O]$ is an isomorphism. Therefore 
$$\CC(\Hy P^4)=\left \{ [(\Hy P^4\#\Sigma, h_{can})]~|~\Sigma\in \Theta_{16} \right \}\cong \Z_2.$$

Finally we consider $n=5$. Observe from Theorem \ref{iner} and (ii) that the maps 
$$f^{*}_{\Hy P^{5}}:[S^{20}, Top/O]\to [\Hy P^{5}, Top/O]$$ 
is injective and 
$$f^{*}_{\Hy  P^{4}}:[S^{16}, Top/O]\to [\Hy P^{4}, Top/O]$$ 
is an isomorphism. Now we use the fact that the composition $f_{\Hy  P^{4}}\circ p: S^{19}\to  S^{16}$ is multiplication by $4\nu_2$ \cite[Page 38]{Jam76}, to deduce that the homomorphism 
$$p^{*}: [\Hy P^{4}, Top/O]{\to} [S^{19},Top/O]$$ 
is trivial as in (i). Therefore, from the exact sequence \eqref{longG}, it follows that there is an exact sequence 
\begin{equation}\label{exact}
0\to [S^{20},Top/O]\to [\Hy P^{5}, Top/O]{\to [\Hy P^{4}, Top/O]}\to 0.
\end{equation}
Now we prove that this exact sequence splits.  As $[S^{20},Top/O] \cong \Z_{24}$ and $[\Hy P^{4}, Top/O]\cong \mathbb{Z}_2$ by (ii), it is enough to show that the exact sequence splits after localizing at the primes $2$ and $3$. It is clear that at the prime 3, the exact sequence (\ref{exact}) splits. Next we consider the prime $2$. 
Consider the following commutative diagram :
\begin{equation*}\label{digram31}
\begin{CD}
0@>>> [S^{20},Top/O]@>f^{*}_{\Hy P^{5}}>> [\Hy P^{5}, Top/O]@>i^{*}>>  [\Hy P^{4}, Top/O]@>>>0\\
@.          @|          @Aq_{1}^{*}AA       @Aq_{1}^{*}A\cong A\\
0@>>> [S^{20},Top/O]@>q_{2}^{*}>> [\Hy P^{5}/\Hy P^{3}, Top/O]@>i^{*}>>  [\Hy P^{4}/\Hy P^{3}, Top/O]@>>>0
\end{CD}
\end{equation*}
where $q_{1}:\Hy P^{m}\to \Hy P^{m}/\Hy P^{n}$ is the quotient map, $q_{2}:\Hy P^{m}/\Hy P^{n}\to \mathbb{S}^{4m}$ is the collapsing map and $i: \Hy P^{4}/\Hy P^{3}\to \Hy P^{5}/\Hy P^{3}$ is the map  induced by the inclusion $\Hy P^{4}\to \Hy P^{5}$, and the isomorphism $q_{1}^*:[\Hy P^{4}/\Hy P^{3}, Top/O]\to [\Hy P^{4}, Top/O]$ is given by (ii). From this diagram, it follows that, in order to prove the top sequence is split exact sequence at the prime 2, it suffices to show that the attaching map $\mathbb{S}^{19}_{(2)}\to \Hy P^{4}/\Hy P^{3}_{(2)}$ of $\Hy P^{5}/\Hy P^{3}_{(2)}$ is null-homotopic. Note that the attaching map $\mathbb{S}^{19}\to \Hy P^{4}/\Hy P^{3}$ factors in the diagram as below
\begin{equation}\label{fact}
\xymatrix{S^{19} \ar[r] \ar[rd]   & \Hy P^{4}/\Hy P^{3}\simeq S^{16}  \\ 
	& \C P^9/\C P^7\simeq S^{18}\vee S^{16} \ar[u]_{r} }
\end{equation}
where the map $S^{19} \to \C P^9/\C P^7$ is induced by the Hopf fibration $S^{19} \to \C P^9$, $\C P^9/\C P^7\simeq S^{18}\vee S^{16}$ follows from the fact that $Sq^2(x^8)=0$ where $x$ is the generator of the cohomology of $\C P^9$, and the map $r:\C P^9/\C P^7\to \Hy P^{4}/\Hy P^{3}$ is a retraction induced by the natural map $\C P^{9} \to \Hy P^4$. By \cite[Proposition 5.2]{Mos68}, it follows that the map $S^{19} \to S^{18}\vee S^{16}$ is given by $(\eta,0)$.  Therefore, from the diagram (\ref{fact}), the attaching map $\mathbb{S}^{19}_{(2)}\to S^{16}_{(2)}$ is null-homotopic and hence the exact sequence (\ref{exact}) splits. This completes the proof.
 \end{proof}

\section{Smooth Free actions of $\Sp^3$ on Homotopy Spheres}\label{s3act}
The topic of the existence and classification of free actions of $\Sp^1$ and $\Sp^3$ on exotic spheres is of considerable interest. It is well known that these are the only compact connected Lie groups which have free differentiable actions on homotopy spheres \cite{Ku69, KK70}. In fact one may prove that $\Z_p\times \Z_p$ (see \cite{MTW76} for results about finite groups acting freely on spheres) does not act freely on a sphere, so that any Lie group acting freely on a sphere must have rank $1$.  

For spheres acting on spheres,  it follows from Gleason's lemma \cite{Gle50} that such an action is always a principal fibration which is homotopically equivalent to the classical Hopf fibration, i.e., it can be obtained from a pullback of the classical Hopf fibration by a homotopy equivalence (see \cite[Proposition 1]{Hsi66}).  However as smooth actions, there might be many inequivalent ones. Among many results in this regard, Hsiang \cite{Hsi66} has shown that in fact, there are always infinitely many differentiably inequivalent free actions of $\Sp^3$ (respectively, of $\Sp^1$) on $\Sigma^{4n+3}$ (respectively on $\Sigma^{2n+1}$) for $n\geq 2$ (respectively, for $n\geq 4$). Our interest lies in the case of $\Sp^3$-actions.
\begin{definition}\rm \cite[Definition 2.2]{Bur72}
Denote a differentiable action $T:G\times M\to M$ of a (Lie) group $G$ on a differentiable manifold $M$ by $(G, T, M)$. Two differentiable actions $(G, T_i, M_i)$, $i = 1, 2$, are called differentiably equivalent iff there exists an orientation preserving (Lie) group homomorphism $\alpha  : G\to G$, and a diffeomorphism $h : M_1\to M_2$ such that $T_2 (\alpha(g), h (x))= h(T_1 (g, x))$.
\end{definition}
Recall that given a differnetiable manifold $M^k$, sullivan \cite{Sul67} defines a homotopy smoothing of $M^k$ to be a homotopy equivalence of pairs, $f:(L^k,\partial L^k)\to (M^k,\partial M^k)$, where $L^k$ is a smooth manifold. Two homotopy smoothings $(L_{0}^k,f_0)$ and $(L_{1}^k,f_1)$ are called equivalent if there is a diffeomorphism $h:L_{0}^k\to L_{1}^k$ such that $f_2\circ h$ is homotopic to $f_1$. The set of equivalence classes of homotopy smoothings of $M^k$ is denoted by $\mathcal{S}^{\mathit Diff}(M^k)$. 
\begin{lemma} \cite[Proposition 2.6]{Bur72}\label{action}
There is a natural 1-1 correspondence between the equivalence classes of differentiable free $\mathbb{S}^3$-actions on homotopy $4n+3$-spheres and elements of $\mathcal{S}^{\mathit Diff}(\Hy P^n)$.
\end{lemma}
Recall that the correspondence given by Lemma \ref{action} is defined as follows. If $\Sp^3$ acts on $\Sigma^{4n+3}$, then, the principal $\Sp^3$-fibration $\Sigma^{4n+3}\to \Sigma^{4n+3}/\Sp^3$ is completely classified by its characteristic map, namely a homotopy class $f:\Sigma^{4n+3}/\Sp^3\to \Hy P^{\infty}$. Since $\Sigma^{4n+3}/\Sp^3$ has real dimension $4n$, the map  $f$ is factored through a map
$g:\Sigma^{4n+3}/\Sp^3\to \Hy P^{n}$, which is a homotopy equivalence (see \cite[Proposition 1]{Hsi66}). On the other hand if $P^{4n}$ is smooth and homotopy equivalent to $\Hy P^n$,  then there is a pullback
$$\xymatrix{M^{4n+3} \ar[d] \ar[r]^-{\tilde{h}} & \Sp^{4n+3} \ar[d]^{H}\\ 
	             P^{4n} \ar[r]^-{h} & \Hy P^n.}$$ 
where $H:\Sp^{4n+3} \to \Hy P^n$ is the Hopf bundle. It follows easily that the map $\tilde{h}$ is a homotopy equivalence so that $M$ is an exotic sphere $\Sigma^{4n+3}$.\\

Thus, by combining \cite[Theorem 3.1]{Kas17} with Lemma \ref{action}, we have the following :
\begin{theorem}\label{action-home}
There is a bijective correspondence between $[\Hy P^n, Top/O]$ and the set of all equivalence classes of differentiable free $\mathbb{S}^3$-actions on homotopy $4n+3$-spheres $\Sigma^{4n+3}$ for which the orbit space $\Sigma^{4n+3}/\mathbb{S}^3$ is homeomorphic to  $\Hy P^n$.
\end{theorem}
G. Brumfiel \cite{Bru68, Bru71}  has found all possible homotopy spheres in dimensions $9$, $11$ and $13$ which admit free differentiable actions of $\mathbb{S}^1$. He also studied the free differentiable actions of $\mathbb{S}^1$ on homotopy spheres which do not bound $\pi$-manifolds. Following Brumfiel, we study here all possible homotopy spheres which admit free differentiable actions of $\mathbb{S}^3$.

We start by recalling some facts from smoothing theory \cite{Bru68, Bru71}. Let $M^k$, $k\geq 6$, be a simply connected, closed combinatorial manifold with a differentiable structure in the complement of a point. Let $M^k_0=M^k\setminus {\rm int}(\mathbb{D}^k)$, where $\mathbb{D}^k$ is a topologically embedded disc. $M^k_0$ inherits a differentiable structure from $M^k\setminus \{p\}$, hence $\partial M^k_0$ belongs to $\Theta_{k-1}$.  In \cite{Sul67}, Sullivan constructs a bijection $\theta:\mathcal{S}^{Diff}(M_0)\stackrel{\cong} {\to}[M_0, F/O]$. Thus, if $h:M_{0}^{'}\to M_0$ represents an element of $\mathcal{S}^{Diff}(M_0)$, the formula $d\theta(M_{0}^{'}, h)=\partial M_{0}^{'}-\partial M_{0}$ defines a map $d: [M_0, F/O]\to \Theta_{k-1}$. Further, if $v\in {\rm Image}([M^k_0,Top/O]\stackrel{\psi_*}{\to}[M_0, F/O])$ then 
\begin{equation}\label{bou}
dv=\partial^*(v)\in \pi_{k-1}(Top/O)=\Theta_{k-1},
\end{equation}
where $\partial:S^{k-1}\to M^k_0$ represents the homotopy class of the inclusion of the boundary $\partial M_{0}\to M_0$. In particular, $d: [M_0, Top/O]\to \Theta_{k-1}$ is a group homomorphism (see \cite[pp-384]{Bru71}). Let $M=\Hy P^n$ and $\Hy P^{n+1}_0$ is regarded as the total space of the $\mathbb{D}^4$ bundle $H$ over $\Hy P^n$. Thus there are maps $$S^{4n+3}=\partial \Hy P^{n+1}_0 \stackrel{i} {\to}\Hy P^{n+1}_0\stackrel{H} {\to} \Hy P^n$$
 which induces 
$$[\Hy P^n, F/O]\stackrel{H^*}{\to}[\Hy P^{n+1}_0, F/O]\stackrel{\theta^{-1}} {\to}\mathcal{S}^{Diff}(\Hy P^{n+1}_0) \stackrel{i^*} {\to}\Theta_{4n+3},$$ 
where $i^*$ is the map which assigns to a homotopy smoothing of $\Hy P^{n+1}_0$  its boundary, which is a homotopy sphere. Note that $i^*\circ \theta^{-1}=d$. Denote by $\sigma$ the composite $\sigma=i^*\circ \theta^{-1}\circ H^*=d\circ H^*:[\Hy P^n, F/O]\to \Theta_{4n+3}$. \\

Since the map $\theta:\mathcal{S}^{Diff}(\Hy P^{n})\to [\Hy P^{n}, F/O]$ is an injective (\cite{Sul67}) and the proof of \cite[Proposition 1.1]{Bru68} works verbatim for $\Hy P^n$, we have the following result :
\begin{proposition}\label{tec1}
Let $f: P^{4n}\to \Hy P^n$ in $\mathcal{S}^{Diff}(\Hy P^{n})$ correspond to the $\mathbb{S}^3$ action on $\Sigma^{4n+3}$. Then $\Sigma^{4n+3}=\sigma\circ \theta (P^{4n},f)$.
\end{proposition}
\begin{remark}\label{actions}\rm
	\indent
\begin{itemize}
\item[(1.)] Note from Proposition \ref{tec1} that the image of the map $\sigma\circ \theta : \mathcal{S}^{Diff}(\Hy P^{n})\to \Theta_{4n+3}$ is identified with the set of all exotic spheres $\Sigma^{4n+3}$ which admit a differentiable free $\mathbb{S}^3$-action such that the orbit space $\Sigma^{4n+3}/\mathbb{S}^3$ is homotopy equivalent to $\Hy P^n$. 
\item[(2.)] From \ref{bou}, one can also see that the homomorphism $d: [\Hy P^n, Top/O]\stackrel{\psi_*}{\to}[\Hy P^n, F/O]\stackrel{\sigma}{\to}\Theta_{4n+3}=\pi_{4n+3}(Top/O)$ ($n\geq 5$) coincides with the map $[\Hy P^n, Top/O]\stackrel{p^*}{\to}\pi_{4n+3}(Top/O)$ induced by the Hopf map $p:S^{4n+3}\to \Hy P^n$. Now it follows from  Proposition \ref{tec1} that the image of $p^*:[\Hy P^n, Top/O]\to \Theta_{4n+3}$ consists of exotic spheres $\Sigma^{4n+3}$ which admit a differentiable free $\mathbb{S}^3$-action such that the orbit space $\Sigma^{4n+3}/\mathbb{S}^3$ is homeomorphic  to $\Hy P^n$. 
\end{itemize}
\end{remark}
 From the proof of Theorem \ref{main}((i),(ii),(iii)), we get.
\begin{theorem}\label{sphere}
	\begin{itemize}
		\item[(i)] $p^{*}: [\mathbb{H} P^{2}, Top/O]{\longrightarrow} [\mathbb{S}^{11},Top/O]$ is the zero map.
		\item[(ii)] $p^{*}: [\mathbb{H} P^{3}, Top/O]{\longrightarrow} [\mathbb{S}^{15},Top/O]$ is injective,
		 	\item[(iii)] $p^{*}: [\mathbb{H} P^{4}, Top/O]{\longrightarrow} [\mathbb{S}^{19},Top/O]$ is the zero map.
\end{itemize}	
\end{theorem}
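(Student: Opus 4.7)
The plan is to observe that each of (i), (ii), (iii) has already been carried out as a step inside the proof of Theorem~\ref{main}, so the task reduces to extracting the relevant arguments in the form now demanded (about the homomorphism $p^{*}$ induced by the Hopf map). Parts (i) and (iii) are parallel and essentially formal; the real work sits in (ii).

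For (i), I would repeat the computation used in the proof of Theorem~\ref{main}(i). Since $f^{*}_{\Hy P^{2}}$ is an isomorphism and $[\Hy P^{2}, Top/O]\cong\Theta_{8}\cong\Z_{2}$, every class in $[\Hy P^{2}, Top/O]$ factors as $\Sigma\circ f_{\Hy P^{2}}$ for some $\Sigma:\Sp^{8}\to Top/O$. Pulling back along $p$ and invoking James' identification $f_{\Hy P^{2}}\circ p=2\nu_{2}$, the stable commutativity of integers with $\nu_{2}$ combined with $2[\Sigma]=0$ kills the class. Part (iii) is handled in the same way from the proof of Theorem~\ref{main}(iii), replacing $2\nu_{2}$ by $4\nu_{2}$ and $\Theta_{8}$ by $\Theta_{16}\cong\Z_{2}$; the order-$2$ argument again forces the composite to vanish.

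The main obstacle is (ii), since an order-$2$ argument is no longer enough (the target $[\Sp^{15}, Top/O]$ carries non-trivial torsion into which a $\Z_{2}$-class could in principle land as $0$). The plan is to invoke the Toda bracket calculation from the proof of Theorem~\ref{main}(ii): work $2$-locally (permissible because $[\Hy P^{3}, Top/O]\cong\Z_{2}$), apply the splitting $F/O_{(2)}\simeq BSO_{(2)}\times Cok J_{(2)}$ to reduce injectivity of $p^{*}$ to injectivity of $q^{*}:[\Hy P^{3}/\Hy P^{1}, Cok J_{(2)}]\to[\Sp^{15}, Cok J_{(2)}]$ in the stable homotopy category, and show that the generator $c_{0}$ is sent to the unique non-zero element $h_{1}d_{0}\in\pi_{15}(Cok J_{(2)})$. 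The delicate step is recognising that the attaching data of $\Hy P^{3}/\Hy P^{1}$ forces $q^{*}c_{0}\in\langle 3h_{2}, 2h_{2}, c_{0}\rangle$ with trivial indeterminacy, after which the relations $d_{0}=\langle h_{0},h_{1},h_{2},c_{0}\rangle$ and $h_{0}h_{2}=\langle h_{1},h_{0},h_{1}\rangle$ (together with juggling of Toda brackets via \cite[Proposition 5.7.4]{Ko96}) place $h_{1}d_{0}$ inside the same bracket; this is the one genuine non-formal ingredient in the whole theorem.
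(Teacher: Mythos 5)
Your proposal is correct and follows exactly the paper's route: the paper's proof of Theorem~\ref{sphere} consists precisely of the observation that parts (i), (ii), (iii) were established as intermediate steps in the proof of Theorem~\ref{main}, namely the $2\nu_{2}$ argument for (i), the $2$-local Toda bracket computation $\langle 3h_{2},2h_{2},c_{0}\rangle\ni h_{1}d_{0}$ for (ii), and the $4\nu_{2}$ argument for (iii). The only cosmetic difference is that at the prime $2$ the paper phrases the reduction to $Cok\,J_{(2)}$ via the equality of images of $\alpha$ and $\beta$ in $[-,F/O]_{(2)}$ rather than via an H-space splitting (which holds only at odd primes), but this does not affect the argument.
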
	
By combining Theorem \ref{main}(ii), the fact $[\mathbb{H} P^{2}, Top/O]\cong \mathbb{Z}_2$, and Theorem \ref{sphere} with Remark \ref{actions}(2), we can conclude the following results.
\begin{theorem}
For $n=2$ and $4$, there are exactly two equivalence classes of differentiable free $\mathbb{S}^3$-actions on $\mathbb{S}^{4n+3}$ such that the orbit space $\mathbb{S}^{4n+3}/\mathbb{S}^3$ is homeomorphic to  $\Hy P^n$.
\end{theorem}
\begin{theorem}
There exists an exotic $15$-sphere $\Sigma^{15}$ which does not bound a manifold such that  $\Sigma^{15}$ admits a free differentiable action of $\mathbb{S}^3$ such that the orbit space is homeomorphic to the quaternionic projective space $\mathbb{H} P^3$.
\end{theorem}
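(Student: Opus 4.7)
The plan is to combine Lemma \ref{actions} with the injectivity of $p^{\ast}$ established in Theorem \ref{sphere}(ii) and the explicit identification of the image that is already extracted during the proof of Theorem \ref{main}(ii). By Lemma \ref{actions}, it suffices to produce a homotopy sphere $\Sigma^{15}\in\Theta_{15}$ which (a) lies in the image of $p^{\ast}\colon[\Hy P^3,PL/O]\to\pi_{15}(PL/O)=\Theta_{15}$, and (b) does not belong to the subgroup $bP_{16}\subset\Theta_{15}$ of homotopy spheres that bound parallelizable manifolds.

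First I would compare the $PL/O$-classifying space with the $Top/O$-version used in the previous computations. Using the fibre sequence $PL/O\to Top/O\to Top/PL\simeq K(\Z/2,3)$, the associated long exact sequence in $[\Hy P^3,-]$ begins with $H^{2}(\Hy P^3;\Z/2)$ and ends with $H^{3}(\Hy P^3;\Z/2)$, both of which vanish since the mod-$2$ cohomology of $\Hy P^3$ is concentrated in degrees $0,4,8,12$. Consequently $[\Hy P^3,PL/O]\cong[\Hy P^3,Top/O]$; and $\pi_{15}(Top/PL)=0$ identifies $[S^{15},PL/O]=\Theta_{15}=[S^{15},Top/O]$. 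Hence the $PL/O$-version of $p^{\ast}$ agrees with its $Top/O$-version. By Theorem \ref{main}(i) $[\Hy P^3,Top/O]\cong\Z/2$, and by Theorem \ref{sphere}(ii) the map $p^{\ast}$ is injective; therefore the non-trivial class in $[\Hy P^3,PL/O]$ is sent to a non-trivial element $\Sigma^{15}\in\Theta_{15}$, establishing (a).

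For (b), recall that $bP_{16}$ is the kernel of the Kervaire-Milnor map $\Theta_{15}\to\pi_{15}^{s}/J$; equivalently, it is detected by the map $\Theta_{15}\to[S^{15},\mathrm{Cok}\,J_{(p)}]$ at some prime $p$. Thus $\Sigma^{15}\notin bP_{16}$ will follow from the non-triviality of its image under $[S^{15},Top/O]_{(2)}\to[S^{15},\mathrm{Cok}\,J_{(2)}]$. But this is exactly what is computed during the proof of Theorem \ref{main}(ii): the non-trivial class in $[\Hy P^3,Top/O]_{(2)}$ factors through the generator $c_{0}\in[\Hy P^3/\Hy P^1,\mathrm{Cok}\,J_{(2)}]$, and $q^{\ast}(c_{0})$ is shown to coincide with the non-zero element $h_{1}d_{0}\in\pi_{15}(\mathrm{Cok}\,J_{(2)})$ via the Toda bracket manipulation there. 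So $p^{\ast}$ lands in the cokernel-of-$J$ summand, whence $\Sigma^{15}\notin bP_{16}$.

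The only genuinely new step is the $PL$/$Top$ comparison $[\Hy P^3,PL/O]\cong[\Hy P^3,Top/O]$, which is a short cohomological argument; both the injectivity of $p^{\ast}$ and the identification of its image with the non-$J$-image class $h_{1}d_{0}$ are harvested directly from Theorem \ref{sphere}(ii) and the Toda-bracket computation already carried out in Section \ref{smstr-quart}. The only place one could get into trouble in an independent treatment would be the stable-stem calculation $\langle h_{2},h_{0}h_{2},c_{0}\rangle\ni h_{1}d_{0}$, but we inherit it for free.
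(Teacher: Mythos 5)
Your proposal is correct and follows essentially the same route as the paper, which leaves the proof implicit: the statement is deduced from Lemma \ref{actions} together with the injectivity in Theorem \ref{sphere}(ii), with the claim that $\Sigma^{15}$ does not bound a $\pi$-manifold resting on the $h_1d_0$ detection already carried out in the proof of Theorem \ref{main}(ii). The two points you make explicit --- the identification $[\Hy P^3, PL/O]\cong[\Hy P^3, Top/O]$ via $Top/PL\simeq K(\Z/2,3)$, and the observation that non-triviality in $\mathrm{Cok}\,J_{(2)}$ (detected by $h_1d_0$) rules out membership in $bP_{16}$ --- are precisely the details the paper suppresses, and you supply them correctly.
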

 \section{The Smooth Tangential Structures Set}\label{sec5}
 Recall from \cite[\S 2, \S 4]{MTW80} that there is a well-known tangential surgery exact sequence \begin{equation}\label{surgery}
 L_{m+1}(\mathbb{Z}\pi_1(M))\to \mathscr{S}^t(M) \stackrel{\eta^t}{\longrightarrow}[M, SF] {\longrightarrow}L_{m}(\mathbb{Z}\pi_1(M)).
 \end{equation}
 where $\mathscr{S}^t(M)$ is the tangential simple structure set for a closed smooth manifold $M$ of dimension $m\geq5$. In this paper, following Hertz \cite[pp.518 and Lemma 2.1]{Her69}, we use the following variant of the tangential smooth structure set.\\
Recall that two manifolds $N$ and $M$ are called tangentially homotopy equivalent if there is a homotopy equivalence $f: N\to M$ such that for some integers $k$, $l$, $f^{*}T(M)\oplus \displaystyle{\epsilon}_{N}^{k}\cong T(N)\oplus \displaystyle{\epsilon}_{N}^{l}$. 
 \begin{definition}\label{tang}\rm
 Let $M$ be a smooth manifold. Let $(N,f)$ be a pair consisting of a smooth manifold $N$ together with a tangential homotopy equivalence $f : N\to M$. Two such pairs $(N_1, f_1)$ and $(N_2, f_2)$ are equivalent if there exists a diffeomorphism $g : N_1\to N_2$ such that the composition $f_2\circ g$ is homotopic to $f_1$. The set of all such equivalence classes is denoted by $\mathcal{S}^{\mathit t}(M)$\footnote{Hertz denotes this by $\theta(M)$.}
\end{definition}
\begin{remark}\rm
Note that the tangential smooth structure set $\mathcal{S}^{\mathit t}(M)$ of $M$ is slightly different from the one described in the exact sequence \ref{surgery}. For example., from the surgery exact sequence \ref{surgery}, the structure set $\mathscr{S}^t(\mathbb{S}^7)$ is an infinite set , whereas the tangential smooth structure set $\mathcal{S}^{\mathit t}(\mathbb{S}^7)\cong \Theta_7$, which is a finite abelian group.
\end{remark} 
For $M=\Hy P^{n}$, Hertz \cite{Her69} gave an inductive geometric procedure where by representatives for all elements of $\mathcal{S}^{\mathit t}(\Hy P^{n})$ may be constructed from elements of $\mathcal{S}^{\mathit t}(\Hy P^{n-1})$ and showed that $\mathcal{S}^{\mathit t}(\Hy P^{2})$ contains atmost two elements, namely 
$$\mathcal{S}^{\mathit t}(\Hy P^2)=\{[(\Hy P^2\#\Sigma^8,h_{can})]~|~\Sigma^8\in \Theta_{8}=\mathbb{Z}_2\}.$$ Now it follows from \cite[Corollary 3.4]{Kas17} that  
$\mathcal{S}^{\mathit t}(\Hy P^2)$ contains exactly two elements.
In this section, we extend the computation to $n=3, 4, 5$. In the case $n=4$ the calculation also allows us to conclude a classification of diffeomorphism classes of smooth manifolds in the tangential homotopy type of $\Hy P^4$.\\

Let $M$ be a closed smooth manifold homotopy equivalent to $\Hy P^{n}$, $n\geq 2$. By the surgery exact sequence(\cite{Bro72}), we have 
\begin{equation}\label{ex1}
0\to\mathcal{S}^{Diff}(M)\to [M, F/O]\stackrel{\omega}{\longrightarrow}L_{4n}(\mathbb{Z}).
\end{equation}
Note that there are injective forgetful maps $F_t:\mathcal{S}^{\mathit t}(M)\to \mathcal{S}^{Diff}(M)$ and $F_c:\mathcal{C}(M)\to \mathcal{S}^{Diff}(M)$ (\cite[Theorem 3.1]{Kas17}). To study $[M, F/O]$ we use the exact sequence
$$\widetilde{KO}^{-1}(M)\to [M, SF]\stackrel{\phi_* }{\longrightarrow} [M, F/O]\to \widetilde{KO}^0(M)\to \bar{J}(M)$$
induced from fibrations 
$$SO\to SF\stackrel{\phi }{\longrightarrow}F/O\to BSO\to BSF.$$
Since $\widetilde{KO}^0(M)$ is free abelian group (\cite{SS73}), 
it follows that the image $${\rm Im}([M, SF]\stackrel{\phi_* }{\longrightarrow}  [M, F/O])$$ is the torsion subgroup of $[M, F/O]$. If the homotopy equivalence $f : N\to M$ represents an element of $\mathcal{S}^{Diff}(M)$, its image in $\widetilde{KO}^0(M)$ is given
by 
$$(f^{-1})^{*}T^0N-T^0M,$$ 
where $T^0(N)$ is the stable tangent bundle of $N$. Thus, a normal map $\varphi\in [M, F/O]$ represents an element $[N,f]\in \mathcal{S}^{\mathit t}(M)$  if and only if $$\varphi\in {\rm Im}([M, SF]\stackrel{\phi_* }{\longrightarrow}  [M, F/O])$$ and $\omega(\varphi)=0$. Therefore we have the following result: 
\begin{corollary}\label{sur}
Let $M$ be a closed smooth manifold homotopy equivalent to $\mathbb{H} P^{n}$, $n\geq 2$ and let $\omega :{\rm Im}([M, SF]\stackrel{\phi_* }{\longrightarrow}  [M, F/O])\to L_{4n}(\mathbb{Z})$ be the surgery obstruction. Then the image of $\mathcal{S}^{\mathit t}(M)\to [M,F/O]$ is ${\rm \Ker}(\omega)$. In particular, there is a bijection $\mathcal{S}^{\mathit t}(M)\cong {\rm \Ker}(\omega)$.
\end{corollary} 
 Now we prove the following.
 \begin{theorem}\label{imge}
Let $M$ be a closed smooth manifold homotopy equivalent to $\mathbb{H} P^{n}$, $n\geq 2$. Then 
 	\begin{center}
 		${\mathit Im}([M, SF]\stackrel{\phi_* }{\longrightarrow} [M, F/O])={\mathit Im}([M, Top/O]\stackrel{\psi_* }{\longrightarrow} [M, F/O]).$
 	\end{center}
 \end{theorem}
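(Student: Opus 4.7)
The plan is to identify both images with the torsion subgroup of $[M, F/O]$. For $\mathrm{Im}(\phi_*)$, the fibration $SO \to SF \xrightarrow{\phi} F/O \to BSO$ yields the exact sequence $[M, SF] \xrightarrow{\phi_*} [M, F/O] \to \widetilde{KO}^0(M)$; since $M \simeq \Hy P^n$ forces $\widetilde{KO}^0(M)$ to be free abelian, the image $\mathrm{Im}(\phi_*)$ is precisely the torsion subgroup of $[M, F/O]$ (as already observed in the discussion preceding Corollary \ref{sur}).

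For $\mathrm{Im}(\psi_*)$, where $\psi : Top/O \to F/O$ is the natural map induced by $Top \hookrightarrow F$, the fiber sequence $Top/O \to F/O \to F/Top$ gives
\[ \mathrm{Im}(\psi_*) = \ker\bigl([M, F/O] \to [M, F/Top]\bigr). \]
I would then show this kernel coincides with the torsion subgroup of $[M, F/O]$ in two steps. First, since $\pi_i(Top/O) = \Theta_i$ is finite for every $i$, the map $F/O \to F/Top$ is a rational equivalence; hence $[M, F/O] \to [M, F/Top]$ is a rational isomorphism and its kernel is torsion. Second, provided that $[M, F/Top]$ is itself torsion-free, every torsion element of $[M, F/O]$ must already lie in the kernel. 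The two containments together give $\mathrm{Im}(\psi_*) = \mathrm{Torsion}([M, F/O]) = \mathrm{Im}(\phi_*)$.

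The substantive step is therefore the torsion-freeness of $[M, F/Top]$ for $M \simeq \Hy P^n$. At an odd prime $p$, Sullivan's splitting gives an H-space equivalence $F/Top_{(p)} \simeq BSO_{(p)}$, so $[M, F/Top_{(p)}] \cong \widetilde{KO}^0(M)_{(p)}$ is free abelian. At the prime $2$, I would induct up the Postnikov tower of $F/Top_{(2)}$, whose homotopy groups satisfy $\pi_{4k} = \mathbb{Z}_{(2)}$, $\pi_{4k+2} = \mathbb{Z}/2$, and $\pi_{\mathrm{odd}} = 0$. Since $\Hy P^n$ has cohomology only in degrees divisible by $4$, both $H^{4k+1}(M; \mathbb{Z}/2)$ and $H^{4k+2}(M; \mathbb{Z}/2)$ vanish, so every $K(\mathbb{Z}/2, \cdot)$-Postnikov stage contributes nothing to $[M, F/Top_{(2)}]$. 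Hence $[M, F/Top_{(2)}]$ is an iterated extension of the free $\mathbb{Z}_{(2)}$-modules $H^{4k}(M; \mathbb{Z}_{(2)})$, and is therefore torsion-free.

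The main obstacle is precisely this $2$-local computation: one needs the vanishing of mod $2$ cohomology of $\Hy P^n$ in the relevant degrees to sidestep the non-trivial $k$-invariants of $F/Top_{(2)}$ (the one involving $\delta\, Sq^2$ studied by Sullivan and Morgan-Sullivan), since otherwise one would have to track how $\mathbb{Z}/2$-summands extend across the Postnikov tower.
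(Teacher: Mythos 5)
Your argument is correct and is essentially the paper's: the paper simply defers to the proof of Theorem 5.2 in \cite{Kas17}, citing as the only needed inputs exactly the two facts your proof turns on, namely that $\widetilde{KO}^0(M)$ is free abelian (so ${\mathit Im}(\phi_*)$ is the torsion subgroup of $[M,F/O]$) and that $H_k(M;\Z)=0$ for $k\not\equiv 0 \bmod 4$ (so $[M,F/Top]$ is torsion-free and ${\mathit Im}(\psi_*)$ is also the torsion subgroup). Your $2$-local Postnikov step goes through because the vanishing of the odd and $\equiv 2 \bmod 4$ cohomology of $\Hy P^n$ kills both the $K(\Z/2,4k+2)$-stages and the boundary contributions from $[M,\Omega P_{m-1}]$ (alternatively one can just quote Sullivan's $2$-local splitting of $F/Top$ into Eilenberg--MacLane spaces).
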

 \begin{proof}
 The proof follows from the argument given in the proof of Theorem 5.2 in \cite{Kas17} by noting that $\widetilde{KO}^0(M)$ is free abelian group, $H_k(M;\mathbb{Z})=0$ for all $k\not\equiv{\rm 0~ mod~ 4}$ and the map $\psi_*:[M,Top/O]\to [M, F/O]$ is an injective.
 \end{proof}
\begin{theorem}\label{tanset}
Let $M$ be a closed smooth manifold homotopy equivalent to $\mathbb{H}P^{n}$, $n\geq 2$. Then $${\rm Im}(\mathcal{S}^{\mathit t}(M)\stackrel{F_t}{\longrightarrow}\mathcal{S}^{Diff}(M))={\rm Im}(\mathcal{C}(M)\stackrel{F_c}{\longrightarrow}\mathcal{S}^{Diff}(M)).$$ In particular, there is a bijection $\mathcal{S}^{\mathit t}(M)\cong \mathcal{C}(M)$.
\end{theorem}
 \begin{proof}
 It follows from the exact sequence (\ref{ex1}) that a normal map $\varphi\in [M, F/O]$ represents an element $[N,f]\in \mathcal{S}^{Diff}(M)$  if and only if $\omega(\varphi)=0$. Observe that if $\varphi$ lies in the image $$[M,Top/O]\xhookrightarrow{F_c}\mathcal{S}^{Diff}(M)\xhookrightarrow{inj} [M,F/O],$$
 then $\omega(\varphi)=0$. Therefore the surgery obstruction 
$$\omega:{\mathit Im}([M,Top/O]\stackrel{\psi_*}{\longrightarrow}[M, F/O])\to L_{4n}(\mathbb{Z})$$
 is the zero map. Now by Theorem \ref{imge} and Corollary \ref{sur}, we get that 
$${\rm Im}(\mathcal{S}^{\mathit t}(M)\stackrel{F_t}{\longrightarrow}\mathcal{S}^{Diff}(M))={\rm Im}(\mathcal{C}(M)\stackrel{F_c}{\longrightarrow}\mathcal{S}^{Diff}(M)).$$
This completes the proof.
 \end{proof}
The above theorem immediately implies the following result.
\begin{corollary}\label{imge1}
	Let $M$ be a closed smooth manifold homotopy equivalent to $\mathbb{H}P^{n}$, $n\geq 2$. If a smooth manifold $N$ is tangential homotopy equivalent to $M$, then $N$ is homeomorphic to $M$. 	
\end{corollary}
As a consequence of Theorem \ref{tanset} and Theorem \ref{main}, we get :
\begin{theorem}\label{struset}
	\indent
  	\begin{itemize}
  \item[(i)] $\mathcal{S}^{\mathit t}(\mathbb{H}P^3)$ contains exactly $2$ equivalence classes.
    \item[(ii)] $\mathcal{S}^{\mathit t}(\mathbb{H} P^4)=\left \{ [(\mathbb{H} P^4\#\Sigma,h_{can})]~~ |~~\Sigma\in \Theta_{16} \right \}$, where $h_{can}:\Hy P^4\#\Sigma \to \Hy P^4$ is the canonical homeomorphism.
      \item[(iii)] $\mathcal{S}^{\mathit t}(\mathbb{H} P^5)$ has exactly $48$ classes of manifolds tangentially homotopy equivalent to $\Hy P^5$.
  \end{itemize}
  \end{theorem}
 The next result follows immediately from Theorem \ref{struset}(ii).
 \begin{corollary}
 Let $M$ be a smooth manifold tangential homotopy equivalent to $\mathbb{H} P^4$. Then there is a homotopy $16$-sphere $\Sigma$ such that $M$ is diffeomorphic to $\mathbb{H} P^4\#\Sigma$.
 \end{corollary}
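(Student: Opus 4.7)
The plan is to read the corollary directly off Theorem~\ref{struset}(ii), so the task is really just to unwind the definition of the tangential structure set and check that a tangential homotopy equivalence $M \to \mathbb{H}P^4$ legitimately produces an element of $\theta(\mathbb{H}P^4)$ which must then be realized on one of the standard representatives.

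First, I would fix a tangential homotopy equivalence $f \colon M \to \mathbb{H}P^4$, whose existence is exactly the hypothesis. By the definition of $\theta(\mathbb{H}P^4)$ at the start of Section~\ref{sec5}, the pair $(M,f)$ determines a class $[(M,f)] \in \theta(\mathbb{H}P^4)$. Next, I would invoke Theorem~\ref{struset}(ii), which identifies $\theta(\mathbb{H}P^4)$ as the set $\{[\mathbb{H}P^4 \# \Sigma] \mid \Sigma \in \Theta_{16}\}$; consequently there exists some $\Sigma \in \Theta_{16}$ and a tangential homotopy equivalence $h \colon \mathbb{H}P^4 \# \Sigma \to \mathbb{H}P^4$ with
\[
[(M,f)] \;=\; [(\mathbb{H}P^4 \# \Sigma,\, h)] \quad \text{in } \theta(\mathbb{H}P^4).
\]

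Finally, I would unpack the equivalence relation defining $\theta$: the above equality means precisely that there is a diffeomorphism $g \colon M \to \mathbb{H}P^4 \# \Sigma$ such that $h \circ g$ is homotopic to $f$. In particular $M$ is diffeomorphic to $\mathbb{H}P^4 \# \Sigma$, which is the assertion of the corollary.

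The only step that requires any care is the very first one, namely that the hypothesis in the corollary (tangential homotopy equivalence of smooth manifolds) matches the definition of a class in $\theta(\mathbb{H}P^4)$. Since $\theta$ is defined exactly in terms of tangential homotopy equivalences and the equivalence $\sim$ is by diffeomorphism of source manifolds compatible with the reference maps up to homotopy, there is no real obstruction; everything else is bookkeeping. No stable tangent bundle calculation or surgery argument is needed at this stage because that work has already been absorbed into Theorem~\ref{tanset} and Theorem~\ref{main}(ii).
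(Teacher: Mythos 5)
Your proposal is correct and follows the same route as the paper, which simply records the corollary as an immediate consequence of Theorem~\ref{struset}(ii); your unwinding of the equivalence relation defining $\theta(\mathbb{H}P^4)$ (a class equality yields a diffeomorphism of the source manifolds) is exactly the bookkeeping the paper leaves implicit.
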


\end{document}